\documentclass[10pt]{amsart}

\usepackage{graphicx}
\usepackage{amsmath}
\usepackage{amssymb}
\usepackage{mathrsfs} 
\usepackage{hyperref}
\usepackage{enumerate}

\usepackage[utf8]{inputenc}

\newtheorem{thm}{Theorem}[section]

\newtheorem{lem}[thm]{Lemma}
\newtheorem{cor}[thm]{Corollary}
\newtheorem{con}[thm]{Conjecture}

\theoremstyle{definition}
\newtheorem{definition}[thm]{Definition}
\newtheorem{example}[thm]{Example}

\theoremstyle{remark}

\newtheorem{remark}[thm]{Remark}

\numberwithin{equation}{section}

\newcommand{\A}{\mathbb{A}}
\newcommand{\R}{\mathbb{R}} 
\newcommand{\G}{\mathbb{G}}
\newcommand{\N}{\mathbb{N}}
\newcommand{\C}{\mathbb{C}}

\newcommand{\mres}{%
  \,\raisebox{-.127ex}{\reflectbox{\rotatebox[origin=br]{-90}{$\lnot$}}}\,%
 }

\begin{document}

\title{ON DIMENSION AND REGULARITY OF BUNDLE MEASURES}

\author{R. Ayoush}
\thanks{R.A. was supported by the Warsaw Center of Mathematics and Computer Science}
\address{Institute of Mathematics, Polish Academy of Sciences
00-656 Warszawa, Poland}
\email{rayoush@impan.pl}

\author{M. Wojciechowski}
\address{Institute of Mathematics, Polish Academy of Sciences
00-656 Warszawa, Poland}
\email{m.wojciechowski@impan.pl}

\subjclass[2000]{42B10,28B05,28A78}
\keywords{vector valued measures, Hausdorff dimension, Fourier transform}

\begin{abstract}
In this paper we quantify the notion of antisymmetry of the Fourier transform of certain vector valued measures. The introduced scale is related to the condition appearing in Uchiyama's theorem and is used to give a lower bound for the rectifiable dimension of those measures. Moreover, we obtain an estimate of the lower Hausdorff dimension assuming certain more restrictive version (in the structural sense) of the $2$-wave cone condition for PDE-constrainted measures, extending its applications to more general Fourier-analytic setting. The article contains also a theorem concerning regularity: we prove that elements of considered class vanish on 1-purely unrectifiable sets.
\end{abstract}

\maketitle

%%%%%%%%%%%%%%%%%%%%%%%%%%%%%%%%%%%%%%%%%%%%%%%%%%%%%%%%%%
\section{Introduction}
\subsection{Formulation of results}
Geometric structure and dimensional properties of distributional gradients of functions from $BV(\R^{n})$  are well studied and widely applied (cf. \cite{A}, \cite{ACPP}, \cite{AFP}, \cite{PW}). It is known, for example, that their lower Hausdorff dimension is at least $n-1$ and that it is an optimal bound. Moreover, those measures cannot charge $(n-1)$-purely unrectifiable sets of finite $\mathcal{H}^{n-1}$ measure (see Lemma 3.76 and Theorem 3.78 in \cite{AFP}).
For the class of bundle measures, introduced in \cite{RW}, we can consider analogous problems. 

\begin{definition}
By $\G(m,E)$ let us denote the Grassmannian of $m$-dimensio\-nal subspaces of some fixed, $d$-dimensional real vector space $E$.
 We call a bundle any continuous function  $\phi:\mathbb{R}^{n}\backslash\{0\} \to \G(m,E)$. If additionally  $\phi(a\xi)=\phi(\xi)$ for any positive $a$,  then we refer to it as a homogeneous bundle.
\end{definition}

This setting gives a possibility to define bundle measures by imposing Fourier analytic rigidity conditions:

\begin{definition}
For any homogeneous bundle $\phi$, by $M_{\phi}(\mathbb{R}^{n},E)$ we denote the set of non-zero vector measures taking values in $E$ such that for each $\xi \neq 0$ there exists $c(\xi) \in \C$ such that $c(\xi) \hat{\mu}(\xi) \in \phi(\xi)$.
\end{definition}

The above definition generalizes the mentioned example of gradient measures. Indeed, if $f \in BV(\R^{n})$ then $\widehat{\nabla f }(\xi) = 2\pi i \xi \hat{f}(\xi)$, so $\nabla f \in M_{\phi}(\R^{n},\R^{n})$ for a particular bundle $\phi (\xi) = span\{\xi\}$.
As a matter of fact, we cannot make a use of the ideas from the classical proofs for $BV$ gradients due to the absence of sufficiently general coarea formula.

In this article, for the sake of simplicity, we limit our interest mostly to the case of line bundles ($m = 1$), for other ones all proofs can be adapted with straightforward modifications.  Above notation is complementary to the language of $A$-free measures (\cite{DR}); see  Chapter 4 for the explanation.

We propose a conjecture that links antisymmetry of a bundle with dimension of vector measures.
\begin{definition}
By the lower Hausdorff dimension of a non-zero (scalar or vector) measure $\mu$ we understand
$$
\dim_{H}(\mu) = \inf \{\alpha: \exists F \text{ - Borel set,} \ \mu(F) \neq 0, \ \dim_{H} F = \alpha \}.
$$
\end{definition}
\begin{definition} We say that a nonconstant line bundle $\phi$ is antisymmetric on $l$-dimensional subspheres or $l$-antisymmetric ($l=0,1,...,n-1$), if for each $(l+1)$-dimensional subspace  $V \subset \R^{n}$ there exist $\xi_{1}, \xi_{2} \in V \cap S^{n-1}$ such that $\phi(\xi_{1}) \neq  \phi(\xi_{2})$. Denote
$$
a(\phi) =\min \{l: \phi \ \ \text{is} \ l\text{-antisymmetric} \}.
$$
\end{definition}

\begin{con}
\label{ascon}
If $\mu$ is a bundle measure subordinated to a smooth, nonconstant bundle $\phi$, then
$$
 \dim_{H}(\mu) \geq n-a(\phi).
$$
\end{con}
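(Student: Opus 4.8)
The plan is to argue by contraposition and to reduce the global dimension bound to an infinitesimal statement about tangent measures. Suppose $\mu \in M_{\phi}(\R^{n},E)$ is nonzero and concentrated on a Borel set $F$ with $\dim_{H} F = \alpha$; the goal is to show $\alpha \geq n-a(\phi)$. First I would pass to the local structure of $\mu$ at a generic point of $F$: by polar decomposition $\mu = P\,|\mu|$ with $|P|=1$ at $|\mu|$-a.e.\ point $x_{0}$, and after rescaling any tangent measure $\nu \in \mathrm{Tan}(\mu,x_{0})$ inherits the defining constraint in distributional form, namely its distributional Fourier transform lives in the bundle, $\widehat{\nu}(\xi) \in \phi(\xi)_{\mathbb{C}}$ for $\xi \neq 0$. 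The gain from blowing up is twofold: the polar vector is frozen to a single direction $P=P(x_{0})$, and the self-similar $\nu$ concentrates near a set of the same dimension $\alpha$, so that the whole problem is localized to understanding which directions $P$ are compatible with the bundle over the relevant frequency support.

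\textbf{Flat lemma.} The heart of the matter is the case in which the tangent measure is flat, i.e.\ $\nu = P\,\mathcal{H}^{d}\mres V$ for a $d$-dimensional linear subspace $V$; this is automatic on the rectifiable part of $\mu$ by the standard blow-up theorems. Then $\widehat{\nu}$ equals, up to a constant, the vector $P$ times the distribution supported on $V^{\perp}$, so the constraint $\widehat{\nu}(\xi)\in\phi(\xi)_{\mathbb{C}}$ forces
$$
P \in \bigcap_{\xi \in V^{\perp}\setminus\{0\}} \phi(\xi).
$$
Since each $\phi(\xi)$ is a line and $P\neq 0$, this means $\phi(\xi)=\R P$ for every $\xi \in V^{\perp}\setminus\{0\}$; that is, $\phi$ is constant on the $(n-d)$-dimensional subspace $V^{\perp}$. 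Reading this against the definition of $a(\phi)$ with $k+1=n-d$, the subspace $V^{\perp}$ certifies that $\phi$ is \emph{not} $(n-d-1)$-antisymmetric, whence, by monotonicity of antisymmetry in $k$, we get $a(\phi)\geq n-d$, i.e.\ $d\geq n-a(\phi)$. In the flat case $d=\alpha$, so $\alpha \geq n-a(\phi)$, which is the desired bound; note it recovers the classical threshold $n-1$ for $BV$ gradients, where $a(\phi)=1$. This argument is robust and already yields the lower bound for the rectifiable part of $\mu$.

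The genuine difficulty, and the reason the statement is posed as a conjecture, is the purely unrectifiable part of $\mu$, where tangent measures need not be flat and the clean intersection identity above is unavailable. Here the \emph{direction} constraint must be exploited quantitatively rather than through flatness. The plan is to encode antisymmetry as a family of Fourier multipliers adapted to $\phi$ — morally Riesz-type transforms annihilating $\phi(\xi)$ — and to show, in the spirit of Uchiyama's theorem, that antisymmetry of $\phi$ across $a(\phi)$-dimensional subspheres produces cancellation placing $\mu$ in a Hardy-type space on the relevant frequency sectors; quantifying the strength of this antisymmetry (the scale introduced in this paper) should convert into decay of a Littlewood--Paley square function across scales, from which a Frostman-type estimate $|\mu|(B(x,r)) \lesssim r^{\,n-a(\phi)}$ would follow. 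The main obstacle is running this multi-scale argument uniformly over all directions when the tangent objects are fractal: the directional cancellation obtained at a single scale is too weak unless one controls the interaction of frequencies across scales, which is precisely what the stronger $2$-wave cone condition supplies and what an unconditional proof would have to dispense with.
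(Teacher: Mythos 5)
The statement you are asked to prove is labelled a \emph{conjecture} in the paper, and the paper does not prove it; what it proves (Theorem~\ref{mainres}) is the weaker bound $\dim_{rect}(\mu)\geq n-a(\phi)$ for the rectifiable dimension. Your ``flat lemma'' is essentially the paper's own argument for that weaker statement: blow up at a generic point of the rectifiable part, observe that the Fourier transform of $P\,\mathcal{H}^{d}\mres V$ is supported on $V^{\perp}$ (Lemma~\ref{hmlem}), conclude that $\phi$ is constant on $V^{\perp}$ (Lemma~\ref{csphlem}), and read off $d\geq n-a(\phi)$ (Theorem~\ref{rectdim}). So on the part you actually carry out, you and the paper agree. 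Two technical points you elide and the paper does not: a tangent measure of a finite measure need not be a tempered distribution, and weak-$\ast$ convergence of blow-ups in $M(\R^{n})$ does not by itself give convergence in $\mathcal{S}'(\R^{n})$; the paper needs the density bound $|\mu|(B(x,r))\leq C_{x}r^{k}$ at generic points and Lemma~\ref{convlem} to upgrade the convergence, and Lemma~\ref{conslem} to show the constraint survives the limit in the distributional class $\mathcal{S}'_{\phi}$. These are fixable in the rectifiable setting but should not be waved through.

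The genuine gap is the one you yourself flag: the passage from the rectifiable part to $\dim_{H}(\mu)$. Your intermediate claim that a tangent measure ``concentrates near a set of the same dimension $\alpha$'' is false in general --- blowing up can only increase regularity, and a measure of low lower Hausdorff dimension can have perfectly flat (even Lebesgue) tangent measures at typical points; this is exactly why the tangent-measure route yields $\dim_{rect}$ and not $\dim_{H}$. The second half of your text (multipliers adapted to $\phi$, Uchiyama-type cancellation, a Frostman bound $|\mu|(B(x,r))\lesssim r^{n-a(\phi)}$) is a research programme, not a proof: no square-function estimate is derived, no mechanism is given for converting $k$-antisymmetry on subspheres into decay across scales, and the paper itself only manages the intermediate cases under the much stronger hypotheses of Theorem~\ref{2dim} (a $2$-wave-cone-type condition) via $s$-Riesz sets and Sobolev perturbation, a route entirely different from the one you sketch. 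As submitted, your proposal proves Theorem~\ref{mainres}~a) by the paper's own method and leaves Conjecture~\ref{ascon} exactly as open as the paper does.
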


 Our first result confirms correctness of Conjecture~\ref{ascon} under only mild geometric assumptions. To prove this we use classical measure-theoretic method of blowing-up measures, modified for dealing with Fourier transforms.

\begin{thm}
	\label{mainres}
	a) Suppose that $\mu$ is a bundle measure subordinated to a smooth, nonconstant bundle $\phi$. Then
	$$
	\dim_{rect}(\mu) \geq n-a(\phi),
	$$
	where
\begin{equation*}
\begin{split}
\dim_{rect}(\mu):=\min\{k: \exists k \text{-rectifiable  measure} \ \nu  \\ \text{s.t.} \ \mu_{\mres F} = \nu \neq 0\ \text{for some Borel set} \ F\}\cup\{n\}.
\end{split}
\end{equation*}
b) If $\mu$ is rectifiable then either
\[
dim_{H}(\mu) \geq \frac{n}{2}
\]
or $\mu$ can be identified with a scalar measure (its values belong to some line).
\end{thm}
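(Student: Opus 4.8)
The plan is to run a blow-up (tangent measure) argument at a typical point of the rectifiable measure and to read off a rigid pointwise constraint on the bundle from the resulting tangent object. Assume $\mu$ is $k$-rectifiable and nonzero, so that $\dim_{H}(\mu)=k$ and we may write $\mu=\theta\,\mathcal{H}^{k}\mres\Sigma$ with $\Sigma$ a $k$-rectifiable set and $\theta:\Sigma\to E$ an $\mathcal{H}^{k}$-measurable density with $\theta\neq 0$ $\mathcal{H}^{k}$-a.e. If $k\geq n/2$ we are already in the first alternative, so suppose $k<n/2$; the goal is then to show that $\mu$ takes values in a fixed line.

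First I would fix a point $x\in\Sigma$ at which $\Sigma$ admits an approximate tangent plane $T_{x}\in\G(k,\R^{n})$ and the density $\theta(x)\neq 0$ exists; by rectifiability this holds for $\mathcal{H}^{k}$-a.e. $x$. Rescaling $\mu$ around $x$ and passing to a tangent measure yields $\nu_{x}=\theta(x)\,\mathcal{H}^{k}\mres T_{x}$. The crucial point is that subordination survives the blow-up: if $T_{x,r}$ denotes the normalized dilation centred at $x$, then $\widehat{T_{x,r}\mu}(\xi)$ is a scalar multiple of $\hat{\mu}(\xi/r)$, and homogeneity of $\phi$ gives $\phi(\xi/r)=\phi(\xi)$; hence every rescaling, and therefore the limit $\nu_{x}$, is again subordinated to $\phi$, i.e. $\hat{\nu}_{x}(\xi)\in\phi(\xi)$ for $\xi\neq 0$.

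Next I would compute $\hat{\nu}_{x}$ explicitly. Since $\widehat{\mathcal{H}^{k}\mres T_{x}}$ is, up to a nonzero constant, the surface measure on the orthogonal complement $T_{x}^{\perp}$, the transform $\hat{\nu}_{x}$ is supported on $T_{x}^{\perp}$ with constant direction $\theta(x)$ there. The subordination condition then forces $\theta(x)\in\phi(\xi)$ for every $\xi\in T_{x}^{\perp}\setminus\{0\}$. As $\theta(x)\neq 0$ and two distinct lines through the origin meet only at $0$, this means $\phi$ is constant, equal to some line $\ell_{x}\ni\theta(x)$, on $T_{x}^{\perp}\setminus\{0\}$, a subspace of dimension $n-k$.

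Finally comes the dimension count that uses $k<n/2$. For two points $x,y$ of the above full-measure set, $T_{x}^{\perp}$ and $T_{y}^{\perp}$ have dimension $n-k$, so $\dim(T_{x}^{\perp}\cap T_{y}^{\perp})\geq 2(n-k)-n=n-2k\geq 1$. Choosing $0\neq\xi\in T_{x}^{\perp}\cap T_{y}^{\perp}$ gives $\ell_{x}=\phi(\xi)=\ell_{y}$, so there is a single line $\ell$ with $\ell_{x}=\ell$, and hence $\theta(x)\in\ell$, for $\mathcal{H}^{k}$-a.e. $x$. Writing $\ell=\R v$ for a unit vector $v$, the measure becomes $\mu=g\,v$ with $g$ a scalar measure, i.e. $\mu$ can be identified with a scalar measure, as claimed. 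I expect the main obstacle to be the second step: justifying that the pointwise constraint $\hat{\mu}(\xi)\in\phi(\xi)$ is genuinely preserved under the weak-$*$ passage to the tangent measure, which requires controlling the convergence of the Fourier transforms at fixed frequencies (tightness and mass bounds for the blow-ups) and not merely the weak-$*$ convergence of the measures themselves.
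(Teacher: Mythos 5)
Your argument for part (b) follows essentially the same route as the paper: blow up at a generic point, note that each rescaling of a bundle measure is again subordinated to $\phi$ by homogeneity, identify the tangent measure as $\theta(x)\,\mathcal{H}^{k}\mres T_{x}$ whose Fourier transform is carried by $T_{x}^{\perp}$ (Lemma~\ref{hmlem}), deduce $\phi\equiv span\{\theta(x)\}$ on $T_{x}^{\perp}\setminus\{0\}$, and finish with the intersection count $\dim(T_{x}^{\perp}\cap T_{y}^{\perp})\geq n-2k>0$. However, two genuine gaps remain. The first is that part (a) is never addressed. It does follow quickly from your key step --- if $\mu_{\mres F}$ is a nonzero $k$-rectifiable measure with $k<n-a(\phi)$, then $\dim T_{x}^{\perp}=n-k\geq a(\phi)+1$, and constancy of $\phi$ on $T_{x}^{\perp}\setminus\{0\}$ contradicts $a(\phi)$-antisymmetry, since every $(a(\phi)+1)$-dimensional subspace must contain two sphere points with distinct fibres --- but you must also handle the fact that $\mu_{\mres F}$ is not itself a bundle measure: one blows up $\mu$ (which is) and uses a locality statement such as Lemma~\ref{convlem}(b), $Tan^{*}_{\alpha}(f\mu,x)=f(x)Tan^{*}_{\alpha}(\mu,x)$ with $f=1_{F}$, to transfer the flat tangent of $\mu_{\mres F}$ to $\mu$.

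The second gap is the step you yourself flag as the main obstacle: that the constraint $\hat{\mu}(\xi)\in\phi(\xi)$ survives the passage to the tangent measure. As stated it is not even a pointwise assertion about the limit, because $\hat{\nu}_{x}$ is a singular measure on $T_{x}^{\perp}$, not a function, so ``$\hat{\nu}_{x}(\xi)\in\phi(\xi)$'' requires interpretation. The paper resolves this by dualizing: it introduces the class $\mathcal{S}'_{\phi}$ of tuples of tempered distributions annihilating all Schwartz test fields valued in $\phi^{\perp}$, shows via Parseval that every blow-up of a bundle measure lies in this class and that membership passes to limits in $\mathcal{S}'(\R^{n})$ (Lemma~\ref{conslem}), upgrades weak-$*$ convergence in $M(\R^{n})$ to convergence in $\mathcal{S}'(\R^{n})$ using the density bound $|\mu|(B(x,r))\leq C_{x}r^{k}$ available at rectifiability points (Lemma~\ref{convlem}(a)), and only then recovers the pointwise statement $\phi\equiv span\{\theta(x)\}$ on $T_{x}^{\perp}\setminus\{0\}$ by a mollifier argument against the measure $\mathcal{H}^{n-k}_{\mres T_{x}^{\perp}}$ (Lemma~\ref{csphlem}). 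Without some such device, your second and third steps are assertions rather than proofs; with it, your outline coincides with the paper's.
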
 

 Next, in Chapter 3, we prove two theorems which can be treated as extensions of the main result from \cite{RW} (Theorem \ref{rwthm}). Perhaps, the most signifficant theorem of our paper is the following:  
\begin{thm}\label{2dim}
 Suppose that $\mu$ is subordinated to a Lipschitz bundle $\phi$.
If there exist $2-$dimensional spaces $V_{1}, \dots, V_{J}$ such that $\phi(V_{i}\setminus \{0\})$ is contained in a linear space $W_{i}$ (of an arbitrary dimension) and $\cap_{i} W_{i} = \{0\}$, then
\[
\dim_{H}(\mu) \geq 2.
\]
\end{thm}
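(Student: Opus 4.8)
The plan is to combine a purely combinatorial reformulation of the hypothesis with a blow-up argument in the spirit of Theorem~\ref{mainres}, the bundle constraint being preserved under blow-up precisely because $\phi$ is homogeneous of degree $0$.

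First I would record what $\cap_j W_j=\{0\}$ says about $\phi$ itself. Any $2$-plane $V_j$ meets an arbitrary hyperplane $H\subset\R^{n}$ in at least a line, so if $\phi$ were constant, equal to a line $\ell\subset E$, on $H\setminus\{0\}$, then on the line $V_j\cap H$ we would have $\ell\subset\phi\bigl((V_j\cap H)\setminus\{0\}\bigr)\subset W_j$ for every $j$, whence $\ell\subset\cap_j W_j=\{0\}$ — a contradiction. Thus $\phi$ is non-constant on every hyperplane, i.e.\ $a(\phi)\le n-2$, so the value predicted by Conjecture~\ref{ascon} is $n-a(\phi)\ge 2$; the content of the theorem is to secure the bound $2$ unconditionally. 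This lemma is the engine of the argument, and it also records that the hypothesis is genuinely stronger than $a(\phi)\le n-2$, since it fixes the target spaces $W_j$.

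Arguing by contradiction, suppose $s:=\dim_{H}(\mu)<2$ and fix a Borel set $F$ with $\mu(F)\ne0$ and $\dim_{H}F=s$. At a suitable point I would extract, after rescaling, a nonzero tangent measure $\nu$. The crucial structural point is that the blow-up $x\mapsto(x-x_{0})/r$ dilates frequencies by $r$, and since $\phi(r\xi)=\phi(\xi)$, the confinement $\hat\nu(\xi)\in\phi(\xi)$ survives the limit, so $\nu\in M_{\phi}(\R^{n},E)$ again. Writing $Q_j\colon E\to W_j^{\perp}$ for the orthogonal projection, the inclusion $\phi(V_j\setminus\{0\})\subset W_j$ gives $Q_j\hat\nu\equiv 0$ on $V_j$, which by the projection–slice identity means that the push-forward of $Q_j\nu$ to $V_j$ vanishes. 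Since $\cap_j W_j=\{0\}$ is equivalent to $\sum_j W_j^{\perp}=E$, the operator $\sum_j Q_j$ is positive definite, so $\nu$ is reconstructed from the family $\{Q_j\nu\}_j$, each member of which has a vanishing $V_j$-projection.

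The heart of the proof is then to show that such a $\nu$ cannot reflect the smallness $s<2$. In the model (flat) case, when $\nu$ is carried by a line $L=\R e$, its transform depends on $\xi$ only through $\xi\cdot e$, say $\hat\nu(\xi)=\Psi(\xi\cdot e)$; confinement to a \emph{line} bundle then forces, for each level $c$ with $\Psi(c)\ne0$, that $\phi(\xi)=\R\,\Psi(c)$ for every $\xi$ on the affine hyperplane $\{\xi\cdot e=c\}$, i.e.\ $\phi$ is constant on an entire open hemisphere and, by continuity, on its closure including the equator $e^{\perp}$. As every $V_j$ meets that closed hemisphere in an arc, this again produces a common line in all $W_j$, contradicting $\cap_j W_j=\{0\}$. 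I expect the principal obstacle to be the passage from this flat model to a general tangent measure, which need not be supported on a subspace. To close the gap I would either invoke Theorem~\ref{mainres}(a), which under $a(\phi)\le n-2$ yields $\dim_{rect}(\mu)\ge 2$ and thereby disposes of rectifiable tangent measures, combining it with the vanishing-on-$1$-purely-unrectifiable-sets theorem for the remainder; or replace flatness by a quantitative estimate, using the Frostman bound $\nu(B(x,r))\lesssim r^{s}$ to bound $\int_{|\xi|\le R}|\hat\nu|^{2}$ from below while the relations $Q_j\hat\nu|_{V_j}=0$ together with $\sum_j Q_j\succ 0$ force $\hat\nu$ to concentrate near $\cup_j V_j$, a frequency set of codimension $\ge n-2$, which is incompatible with $s<2$. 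Making this concentration quantitative without assuming smoothness or compact support of $\hat\nu$ is the main difficulty of the argument.
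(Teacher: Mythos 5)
Your proposal takes a genuinely different route from the paper and, as you yourself flag at the end, it does not close; the gap is not a technicality. The paper's proof uses no tangent measures here: it \emph{scalarizes}. Assuming $\mu(A)=e\neq 0$ with $\dim_{H}(A)<2$, one picks $j$ with $e\notin W_{j}$ and a functional $\theta\in E^{*}$ with $W_{j}\subset\ker\theta$ and $\theta(e)\neq 0$; then $\nu=\theta(\mu)$ is a \emph{scalar} measure with $\nu(A)\neq 0$ and $|\hat\nu(\xi)|\lesssim|\hat\mu(\xi)|\cdot\mathrm{dist}(\phi(\xi),W_{j})\lesssim\mathrm{dist}(\xi,V_{j})$, the last step by Lipschitz continuity of $\phi$ together with $\phi(V_{j}\setminus\{0\})\subset W_{j}$. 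Hence $\hat\nu$ is quantitatively small on a whole logarithmic horn $\Omega=\{|\xi_{2}|\le\log(1+|\xi_{1}|)\}$ around $V_{j}$ — small enough that $\hat\nu 1_{\Omega}$ is the transform of an $H^{-s}$ distribution for every $s>0$ — while affine $2$-planes parallel to $V_{j}$ meet the complement of $\Omega$ in bounded sets, so the $s$-Riesz machinery (Corollary~\ref{sobolevpert}, which absorbs the $H^{-s}$ error by convolving with a Salem measure) yields $\dim_{H}(\nu)\ge 2$, a contradiction. Your substitute — projecting onto $W_{j}^{\perp}$ and recording that $Q_{j}\hat\mu$ vanishes \emph{on} $V_{j}$, i.e.\ that the push-forward of $Q_{j}\mu$ to $V_{j}$ is zero — keeps only a measure-zero set of frequency information; a vanishing push-forward is a pure cancellation condition (satisfied, e.g., by $\delta_{a}-\delta_{b}$ with $p_{V_{j}}(a)=p_{V_{j}}(b)$) and implies nothing about dimension. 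The engine of the proof is the Lipschitz \emph{rate} of vanishing of $\hat\nu$ in a full neighbourhood of $V_{j}$, which your scheme never exploits; relatedly, your closing heuristic points the wrong way: the uncontrolled part of the spectrum is pushed \emph{away from} $V_{j}$, not concentrated near $\cup_{j}V_{j}$.

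The blow-up route itself meets an essential obstruction. To extract a nonzero tangent measure converging in $\mathcal{S}'(\R^{n})$ (so that the bundle constraint survives, cf.\ Lemmas~\ref{convlem} and~\ref{conslem}) at a point of the exceptional set one needs an upper density bound $|\mu|(B(x,r))\le Cr^{\alpha}$, which a general bundle measure need not satisfy anywhere on that set — this is precisely why Theorem~\ref{mainres} controls only $\dim_{rect}$. Your two patches do not repair this: $\dim_{rect}(\mu)\ge 2$ says nothing about $\mu$ charging a non-rectifiable set of dimension $s\in(1,2)$, and such a set has $\mathcal{H}^{1}=\infty$, so Theorem~\ref{regthm} (which requires $\mathcal{H}^{1}(F)<\infty$) is inapplicable; the ``quantitative concentration'' alternative is exactly the hard part and is left unexecuted. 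Your opening observation that the hypothesis forces $a(\phi)\le n-2$ is correct, but it plays no role in, and does not lead to, the actual proof.
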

The above condition is related to the $k$-wave cone scale introduced in \cite{ADHR} (see also Example \ref{afreeex} for the discussion).
We prove also a rectifiability result which, together with Theorem \ref{rwthm} may be treated as an analogue of Federer-Volpert theorem (\cite{AFP}, Theorem 3.78., Proposition 3.92.).
\begin{thm}
	\label{regthm}
	Suppose that $\phi : \mathbb{R}^{n}\backslash\{0\} \to \G(1,E) $ is a nonconstant,  homogeneous bundle, H\"older with exponent $> \frac{1}{2}$. Then, for $\mu \in M_{\phi}(\R^{n}, E)$  and any $1$-purely unrectifiable set $F$ satisfying $\mathcal{H}^{1}(F) < \infty$ we have 
	$\mu_{\mres F} \equiv 0$.
\end{thm}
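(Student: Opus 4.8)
The plan is to argue by contradiction via a blow-up analysis, exploiting the fact that the \emph{homogeneity} of $\phi$ makes the class $M_{\phi}(\R^{n})$ invariant under the rescalings that produce tangent measures. Suppose $\mu_{\mres F}\not\equiv 0$, so that $|\mu|(F)>0$. First I would record that $\mu$ has no atoms: a blow-up at an atom produces a point mass whose (constant) Fourier transform would have to lie in every line $\phi(\xi)$, which is impossible for a nonconstant $\phi$. Since $\mathcal{H}^{1}(F)<\infty$, the standard density comparison theorems show that the set where the upper $1$-density of $|\mu|$ vanishes is $|\mu|$-null; discarding it, and treating the ($\mathcal{H}^{1}$-null) set of infinite density by a separate, simpler argument, I may reduce to a Borel subset of $F$ on which $|\mu|$ and $\mathcal{H}^{1}_{\mres F}$ are comparable. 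In particular I fix a point $x_{0}\in F$ that is simultaneously a point of positive finite density of $|\mu|$ and a $|\mu|$-density point of $F$.

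Next I would form the rescalings $\mu_{x_{0},r}=c_{r}^{-1}\,(T_{x_{0},r})_{\#}\mu$, where $T_{x_{0},r}(x)=(x-x_{0})/r$ and $c_{r}\sim|\mu|(B(x_{0},r))$. On the Fourier side $\widehat{\mu_{x_{0},r}}(\xi)$ equals, up to normalisation and a unimodular phase, $\hat\mu(\xi/r)$, which lies in $\phi(\xi/r)=\phi(\xi)$ by homogeneity; since a line is stable under multiplication by the phase, every rescaling belongs to $M_{\phi}$. As the lines $\phi(\xi)$ are closed, and local weak-$*$ convergence together with the uniform linear-growth bound forces convergence of the Fourier transforms on compact sets, any tangent measure $\nu\in\mathrm{Tan}(\mu,x_{0})$ is again a nonzero element of $M_{\phi}$, with $|\nu|(B(x,\rho))\le C\rho$. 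Because $x_{0}$ is a $|\mu|$-density point of $F$, the rescaled mass outside $F$ is negligible, so $\nu$ is also a tangent measure of $\mu_{\mres F}$ and its support lies in the tangent cone of $F$ at $x_{0}$. Since $F$ is $1$-purely unrectifiable with $\mathcal{H}^{1}(F)<\infty$, Besicovitch's structure theory (cf.\ \cite{AFP}) shows that at $\mathcal{H}^{1}$-a.e.\ point there is no approximate tangent line, and the density comparison propagates this to $\nu$: the measure $\nu$ is \emph{not} supported on any single line.

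The heart of the argument, and the step I expect to be the main obstacle, is to prove that this is impossible: a nonzero $\nu\in M_{\phi}$ of linear growth must be \emph{flat}, i.e.\ of the form $\nu=w\,\mathcal{H}^{1}_{\mres\ell}$ for a line $\ell=\R e$ and a constant polar $w\in\bigcap_{\xi\perp e}\phi(\xi)$. The flat case is dictated by the Fourier constraint, since a measure on $\ell$ has $\hat\nu$ carried by the hyperplane $e^{\perp}$, and $\hat\nu(\xi)\in\phi(\xi)$ then pins $w$ into that intersection. To exclude a genuinely non-flat $\nu$ I would pass to an iterated tangent measure and analyse the oscillation of the unit field $v(\xi)$ spanning $\phi(\xi)$: writing $\hat\nu(\xi)=f(\xi)\,v(\xi)$, non-flatness means $f$ cannot be supported on a hyperplane, whereas linear growth caps the size of $f$. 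It is exactly here that the H\"older exponent enters: the bound $|v(\xi)-v(\eta)|\lesssim|\xi-\eta|^{\alpha}$ with $\alpha>\tfrac12$ makes the angular variation of $\phi$ across the frequency scales relevant to a $1$-dimensional object too small to sustain a non-flat concentration. Quantitatively this is an $L^{2}$/energy estimate sitting precisely at the $\tfrac12$-threshold that also yields the $n/2$ bound of Theorem~\ref{mainres}(b) and mirrors the borderline hypothesis in Uchiyama's theorem. Controlling this interplay between the H\"older oscillation of $v$ and the square-root scaling intrinsic to $\mathcal{H}^{1}$ is the technical core of the proof.

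Granting the flatness statement, the contradiction is immediate: $\nu$ would be forced onto a line, against the non-flatness established in the blow-up step. Hence no point $x_{0}$ as above can exist, so $|\mu|(F)=0$ and therefore $\mu_{\mres F}\equiv 0$, as claimed. I expect the delicate points to be, in decreasing order of difficulty, the $\tfrac12$-threshold energy estimate forcing flatness, the passage of pure unrectifiability from $F$ to the tangent measure $\nu$, and the routine but necessary disposal of the infinite-density part of $|\mu|_{\mres F}$.
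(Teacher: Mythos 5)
Your strategy is genuinely different from the paper's, and it breaks down at exactly the step you identify as the technical core. The claim that a nonzero $\nu\in M_{\phi}(\R^{n})$ of linear growth must be flat is false as stated: take $\phi(\xi)=\mathrm{span}\{i\xi\}$ on $\R^{2}$ (the gradient bundle, smooth and nonconstant) and $\nu=D\mathbf{1}_{Q}$ for a square $Q$; then $\hat{\nu}(\xi)=2\pi i\,\widehat{\mathbf{1}_{Q}}(\xi)\,\xi\in\phi(\xi)$ and $|\nu|(B(x,\rho))\lesssim\rho$, yet $\nu$ is carried by $\partial Q$, not by a line. (Even restricting to tangent measures does not help: the blow-ups of this $\nu$ at a corner of $Q$ are non-flat elements of $M_{\phi}$ with linear growth.) The statement you would actually need is that tangent measures are flat at $|\mu|$-\emph{almost every} point, which is a Preiss-type rigidity theorem of the same depth as the rectifiability results of \cite{ADHR}, and it cannot follow from the heuristic that the H\"older oscillation of $v(\xi)$ is ``too small at the $\frac12$-threshold''. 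Since you leave this step unproved and the version you state is false, the proposal is not a proof. There are further unaddressed difficulties: the mass of $\mu_{\mres F}$ may sit on an $\mathcal{H}^{1}$-null subset of $F$, where your density comparison with $\mathcal{H}^{1}_{\mres F}$ collapses; tangent measures are only locally finite, so ``$\hat{\nu}(\xi)\in\phi(\xi)$'' must be given a distributional meaning (this is precisely the role of Lemmas \ref{convlem} and \ref{conslem}, and blow-ups of finite measures need not even be tempered); and transferring pure unrectifiability to a tangent measure is delicate because densities of purely unrectifiable sets typically fail to exist.

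For comparison, the paper's proof avoids blow-ups entirely. It first establishes Theorem \ref{edthm}: if $\mu(A)=e\neq 0$, then every direction $v$ with $\lambda(p_{v}(A))=0$ satisfies $e\in\phi(v)$. This is a projection-type refinement of Theorem \ref{rwthm}, proved with the $s$-Riesz set machinery (the F.\ and M.\ Riesz theorem applied on lines parallel to $\mathrm{span}\{v\}$, plus an $L^{2}$ error term controlled via Corollary \ref{l2pert}); the hypothesis of H\"older continuity with exponent $>\frac12$ enters only there, to make that error square-integrable. The Besicovitch--Federer projection theorem (Theorem \ref{bfederer}) then says that for a purely $1$-unrectifiable $F$ with $\mathcal{H}^{1}(F)<\infty$ almost every one-dimensional projection is null, so if $\mu(F')=e\neq 0$ the set $\phi^{-1}(e)$ contains almost every direction of $S^{n-1}$; by continuity $\phi\equiv\mathrm{span}\{e\}$, contradicting nonconstancy. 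If you wish to pursue a blow-up proof, you must first prove the a.e.-flatness of tangent measures, which is strictly harder than Theorem \ref{regthm} itself.
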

Proofs of both results are based on the theory of $s$-Riesz sets from \cite{RW}, extended to tempered Radon measures. To get the first one we combine it with a suitable use of Salem sets, inspired by so called one-sided Frostman lemma from \cite{KO}.  The rectifiability theorem is obtained by an application of Besicovitch-Federer theorem, which seems to be a new approach for this type of problems.

Chapter 4 contains examples and comparison with some known results about measures satisfying differential equations.

\subsection{Motivation and brief history of the problem}

Conjecture \ref{ascon} is inspired by Uchiyama's theorem on multiplier characterization of Hardy spaces (Theorem~\ref{uchithmquant}) which gives a proof when $a(\phi)=0$. It appeared while an attempt to answer a question from \cite{RW}:

\begin{con}(\cite{RW}, Conjecture 1)
	\label{rwcon}
	If the Fourier transform of a bundle measure $\mu$ contains $n$ linearly independent vectors and $\mu \in M_{\phi}(\mathbb{R}^{n}, E)$ for some line bundle $\phi$, then 
	$\dim_{H}(\mu) \geq n-1$.
\end{con}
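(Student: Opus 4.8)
The plan is to argue by contradiction, exploiting the fact that, since $\phi$ is homogeneous of degree $0$, the class $M_{\phi}(\mathbb{R}^{n},E)$ is preserved under the blow-ups (rescalings) used in Theorem~\ref{mainres}: if $\nu\in\mathrm{Tan}(\mu,x)$ is obtained by rescaling $\mu$ around $x$, then $\hat\nu(\xi)$ is a scalar multiple of $\hat\mu(\xi/r)\in\phi(\xi/r)=\phi(\xi)$, so $\nu\in M_{\phi}$ as well. Assuming $\dim_{H}(\mu)<n-1$, I would concentrate $\mu$ on a Borel set $F$ with $\dim_{H}F<n-1$ and split $\mu_{\mres F}$ into its rectifiable and purely unrectifiable parts. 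A useful reformulation to carry throughout is the component reduction $\dim_{H}(\mu)=\inf_{c}\dim_{H}(\nu_{c})$, where $\nu_{c}=\langle c,\mu\rangle$ ranges over scalar projections by covectors $c$; here $\mu_{\mres F}=0$ is equivalent to $\nu_{c\,\mres F}=0$ for all $c$. Since $\hat\mu(\xi)\in\phi(\xi)$, each $\nu_{c}$ has Fourier transform vanishing on the cone $Z_{c}=\{\xi:\phi(\xi)\perp c\}$, i.e. its spectrum is confined to $\Omega_{c}=\{\xi:\phi(\xi)\not\subseteq c^{\perp}\}$, and the independence hypothesis guarantees $\nu_{c}\neq0$ for a spanning family of $c$.

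For the rectifiable regime I would reuse the mechanism behind Theorem~\ref{mainres}a: at a point of approximate tangency the blow-up $\nu$ is flat, $\nu=\theta\,\mathcal{H}^{k}\mres P$ with a fixed polar vector $\tau\in E$, so $\hat\nu$ is supported on $P^{\perp}$ and points in the constant direction $\mathbb{R}\tau$; together with $\nu\in M_{\phi}$ this forces $\phi\equiv\mathbb{R}\tau$ on the $(n-k)$-dimensional subspace $P^{\perp}$, whence $k\ge n-a(\phi)$. The independence hypothesis enters precisely to exclude the degenerate alternative of Theorem~\ref{mainres}b: because $\hat\mu$ contains $n$ linearly independent vectors, $\mu$ cannot take values in a single line, the bundle must genuinely vary, and the flat polars $\tau$ cannot all be aligned. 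I would use this to certify that the relevant family of cones $Z_{c}$ is genuinely large rather than contained in one hyperplane, which is what one needs to convert the frequency restriction into a dimension bound.

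The purely unrectifiable regime is the real obstacle, and it is exactly where the Hausdorff conclusion exceeds what Theorem~\ref{mainres}a provides (one has $\dim_{H}\le\dim_{rect}$, so the rectifiable bound gives nothing in this direction). Here tangent measures need not be flat, so the clean antisymmetry argument above is unavailable, and I would instead try to control these pieces through the $s$-Riesz set theory of \cite{RW}, extended to tempered Radon measures as in Theorems~\ref{2dim} and~\ref{regthm}: one would show that the spectral gaps $\hat\nu_{c}\equiv0$ on $Z_{c}$, for the spanning family of $c$ supplied by independence, make the configuration $\{Z_{c}\}$ collectively an $(n-1)$-Riesz family, and then a Salem-set comparison as in Theorem~\ref{2dim} would upgrade the frequency restriction to $\dim_{H}(\nu_{c})\ge n-1$. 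The difficulty I anticipate is intrinsic: a spectral gap across a single hyperplane-type cone $Z_{c}$ is by itself powerless to force any dimension bound, since elementary signed measures with vanishing one-dimensional projection already violate it, so the argument cannot be run one covector at a time and must instead exploit the full vector coupling of the components $\{\nu_{c}\}$ simultaneously. Pinning down the exact Riesz index of the cone configuration determined by a spanning line bundle is the crux on which the conjecture hinges.
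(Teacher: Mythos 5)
There is nothing in the paper to compare your argument against: the statement you set out to prove is Conjecture~\ref{rwcon}, which the paper explicitly leaves open (it is attributed to \cite{RW} and motivates the weaker results actually proved). What the paper establishes is strictly less: $\dim_{H}(\mu)\geq 1$ for any nonconstant H\"older bundle (Theorem~\ref{rwthm}), $\dim_{H}(\mu)\geq 2$ under the additional $2$-plane hypothesis of Theorem~\ref{2dim}, the rectifiable-dimension bound $\dim_{rect}(\mu)\geq n-a(\phi)$ of Theorem~\ref{mainres}, and the special case $(D_{1}^{m}f,\dots,D_{n}^{m}f)=\mu$ settled in \cite{SW}. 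Your proposal, read closely, reconstructs exactly these partial results and then stops at the same wall: your final sentences concede that in the purely unrectifiable regime you have no mechanism, only the hope that the cone family $\{Z_{c}\}$ is ``collectively an $(n-1)$-Riesz family.'' That hope is the entire content of the conjecture, so the proposal is a research plan, not a proof.

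To name the gap concretely: the only Riesz-set technology available (Theorem~\ref{sriesz1} and its perturbations, Corollaries~\ref{l2pert} and~\ref{sobolevpert}) requires a \emph{fixed} $k$-dimensional subspace $V$ such that every translate $(V+a)\cap A$ is a Riesz set inside $V+a$; to reach dimension $n-1$ this way one would need such a $V$ with $\dim V=n-1$, and the complements $\Omega_{c}$ of hyperplane-type cones $Z_{c}$ simply do not have this structure for any single $c$ --- as you yourself observe, one covector at a time proves nothing, since scalar measures with a one-dimensional spectral gap can be very singular. But you supply no replacement that exploits the coupling of the components $\nu_{c}$ jointly, and no such replacement is known; this is precisely why Theorem~\ref{2dim} needs several $2$-planes $V_{j}$ whose images lie in subspaces $W_{j}$ with $\bigcap_{j}W_{j}=\{0\}$, and even then only yields the bound $2$, and why \cite{ADHR} obtains $k$-dimensional bounds only under the stronger $k$-wave cone condition. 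Two smaller inaccuracies are worth flagging: blow-up limits need not satisfy $\hat\nu(\xi)\in\phi(\xi)$ pointwise, since tangent measures need not even be tempered (the paper works instead with $Tan^{*}$ and the distributional class $\mathcal{S}'_{\phi}$, Lemma~\ref{conslem}); and the independence of $n$ vectors in the range of $\hat\mu$ does not control $a(\phi)$ --- the bundle can be constant on the sphere of some $2$-plane while still spanning $E$ globally --- so even granting Conjecture~\ref{ascon} your reduction would not close Conjecture~\ref{rwcon}.
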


\begin{thm}(\cite{RW}, Theorem 3)
	\label{rwthm}
	Let $\phi$ be a nonconstant line bundle, H\"older with exponent~$>~\frac{1}{2}$.  Then $dim_{H}(\mu) \geq 1$ for
	each $\mu \in M_{\phi}(\mathbb{R}^{n}, E)$.
\end{thm}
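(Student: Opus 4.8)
The plan is to argue by contradiction. Suppose some nonzero $\mu\in M_{\phi}(\R^{n})$ charges a Borel set $F$ of Hausdorff dimension strictly below $1$; the goal is to derive that $\phi$ must be constant, contradicting the hypothesis. The conceptual engine is the theory of $s$-Riesz sets developed in \cite{RW}, a quantitative, dimension-sensitive refinement of the classical F.\ and M.\ Riesz theorem: a one-sided spectral condition on a scalar measure precludes concentration on small sets. The whole difficulty is to manufacture such one-sided spectral conditions out of the geometric information carried by the line bundle $\phi$.

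First I would pass from the pointwise bundle condition to a family of one-dimensional problems by projecting onto lines. For a unit vector $\omega$ let $p_{\omega}(x)=\langle x,\omega\rangle$ and let $\lambda_{\omega}=(p_{\omega})_{\#}\mu$ be the pushforward, an $E$-valued measure on $\R$. The identity $\widehat{\lambda_{\omega}}(t)=\hat\mu(t\omega)$ shows that $\widehat{\lambda_{\omega}}(t)\in\phi(\omega)$ for $t>0$ and $\widehat{\lambda_{\omega}}(t)\in\phi(-\omega)$ for $t<0$. When $\omega$ is a direction of antisymmetry, i.e.\ $\phi(\omega)\neq\phi(-\omega)$, I choose a covector $w\in E^{*}$ annihilating $\phi(-\omega)$ but not $\phi(\omega)$; the scalar measure $\langle w,\lambda_{\omega}\rangle$ then has spectrum contained in $[0,\infty)$, so the F.\ and M.\ Riesz theorem makes it absolutely continuous. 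Because Lipschitz maps do not raise Hausdorff dimension, the low-dimensional charge of $\mu$ yields, after projection, mass on a Lebesgue-null subset of $\R$; an absolutely continuous measure cannot carry such mass, and propagating this through the antisymmetric directions forces the corresponding components of $\hat\mu$ to vanish.

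The main obstacle is precisely the opposite, symmetric regime $\phi(\omega)=\phi(-\omega)$, exemplified by the gradient bundle $\phi(\xi)=\R\xi$: here no single line-projection has a spectral gap, the naive F.\ and M.\ Riesz argument collapses, and this is where the hypothesis of H\"older regularity with exponent $\alpha>\tfrac12$ becomes indispensable. My plan is to compare nearby directions rather than a single one: nonconstancy guarantees that $\phi$ genuinely moves, while the H\"older modulus quantifies how fast, so that summing the angular increments of $\phi$ across dyadic scales yields error terms controlled by $\sum_{j}2^{-2\alpha j}$. The exponent $\alpha>\tfrac12$ is exactly the threshold at which this square sum converges, and it is this convergence that lets the $s$-Riesz set machinery of \cite{RW} upgrade the one-sided spectral rigidity from the antisymmetric case to a genuine obstruction valid for every nonconstant H\"older bundle. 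I expect the requisite quantitative almost-orthogonality—ensuring that the perturbed, approximately one-sided spectrum still prohibits concentration on sets of dimension below $1$—to be the technical heart of the argument.

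Once the spectral rigidity is established in all directions, the conclusion is that $\hat\mu$ must vanish identically, whence $\mu=0$, contradicting $\mu(F)\neq0$. Therefore no nonzero element of $M_{\phi}(\R^{n})$ can charge a Borel set of Hausdorff dimension less than $1$, which is exactly the assertion $\dim_{H}(\mu)\geq1$.
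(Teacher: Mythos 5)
There is a genuine gap, and it sits exactly where your argument does the real work: the passage from absolute continuity of a projected measure to the conclusion that $\mu$ cannot charge $F$. If $\lambda_{\omega}=(p_{\omega})_{\#}\mu$ and $\langle w,\lambda_{\omega}\rangle$ is absolutely continuous, nothing follows about $\mu(F)$ for a set $F$ with $\lambda(p_{\omega}(F))=0$: the pushforward aggregates the whole slab $p_{\omega}^{-1}(p_{\omega}(F))$, and mass of $\mu$ on $F$ can be cancelled by mass elsewhere in the slab (and by cancellation among the vector components). To exclude this one must control all modulations $e^{2\pi i\langle b,\cdot\rangle}\sigma$, $b\in\omega^{\perp}$, of the scalar measure $\sigma=\langle w,\mu\rangle$, i.e.\ one needs the spectrum of $\sigma$ to meet \emph{every} line parallel to $\omega$ in a Riesz set --- this is the de Leeuw-type trick behind Theorem \ref{sriesz} and Corollary \ref{l2pert} of the paper. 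But your one-sided condition $\widehat{\sigma}(t\omega)=0$ for $t<0$ holds only on the single line through the origin, so even your ``antisymmetric'' case fails as written: F.\ and M.\ Riesz applied to the one unmodulated pushforward proves nothing about $\dim_{H}(\mu)$. This is also why the H\"older hypothesis is needed in \emph{all} directions, not only in the symmetric regime: it is what upgrades exact vanishing on one ray to $L^{2}$-smallness of $\widehat{\sigma}$ on a full horn $\Omega=\{|\xi_{2}|\le \log(1+|\xi_{1}|)\}$ around that ray, whose complement meets every line parallel to $\omega$ inside a half-line.

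Relatedly, your symmetric/antisymmetric dichotomy is a red herring, and the symmetric half (``dyadic almost-orthogonality'', $\sum_{j}2^{-2\alpha j}$) is a placeholder rather than an argument. The actual proof (sketched in the paper as Theorem \ref{edthm} and executed in detail in the proof of Theorem \ref{2dim}) is uniform in the geometry of $\phi$: suppose $\mu(F)=e\neq 0$ with $\dim_{H}F<1$; if $e\in\phi(v)$ for every unit $v$ then $\phi\equiv span\{e\}$ is constant, so there is $v$ with $e\notin\phi(v)$; choose $\theta\in E^{*}$ with $\phi(v)\subset \ker\theta$, $\theta(e)=1$, and set $\nu=\theta(\mu)$. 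H\"older continuity with exponent $\alpha$ gives $|\hat{\nu}(\xi)|\lesssim \bigl(f(|\xi_{1}|)/|\xi_{1}|\bigr)^{\alpha}$ on the horn ($f(t)=\log(1+t)$, coordinates along and orthogonal to $v$), whence $\int_{\Omega}|\hat{\nu}|^{2}\lesssim \int_{1}^{\infty}f(t)^{n-1}\bigl(f(t)/t\bigr)^{2\alpha}dt<\infty$ precisely when $\alpha>\tfrac12$ --- this is the concrete mechanism your square-sum heuristic gestures at. Corollary \ref{l2pert} (the $L^{2}$-perturbed, line-by-line Riesz-set theorem) then yields $\nu(F)=0$ whenever $\lambda(p_{v}(F))=0$, in particular whenever $\dim_{H}F<1$, contradicting $\nu(F)=\theta(e)=1$. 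Without supplying (i) the modulation/slicing step that defeats cancellation and (ii) an actual estimate replacing the ``almost-orthogonality'' plan, your proposal does not constitute a proof.
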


Theorem~\ref{rwthm} covers the case '$a(\phi) = n-1$' which is on the endpoint opposite to Uchiyama's theorem. In this paper we concentrate on the intermediate points of the scale. Conjecture~\ref{rwcon} was inspired by the example of measures derived from $BV$, that is, satisfying equation $\nabla f = \mu$ for some $f \in L^{1}(\R^{n})$ in the sense of distributions. This result shows, in particular, that if in such problem we replace $\nabla$ by any so called canceling operator (see \cite{V} and Example~\ref{Vex}), then the resulting measure has lower Hausdorff dimension at least 1.
Let us also mention that a particular case of the main result from \cite{SW} is a proof of the above conjecture for measures given by $(D_{1}^{s}f,...,D_{n}^{s}f) = \mu$  for some natural $s$ and $f \in L^{1}(\R^{n})$ ($\phi(\xi) = span \{(\xi_{1}^{s},\xi_{2}^{s}, \dots, \xi_{n}^{s})\}$). In this situation we have $a(\phi)=1$.

The technique used in \cite{SW} revealed strong connections of  dimension estimates with embedding theorems. Briefly: the better range of an embedding connected with a differential operator, the higher lower bound of dimension it gives. It is worth mentioning that canceling and elliptic operators (see \cite{V} or Example~\ref{Vex} for definitions) are precisely those which determine critical Sobolev embedding:

\begin{thm}
	\label{vanthm} (\cite{V}, Theorem 1.3.)  Suppose that $A(D)$ is a homogeneous differential operator of rank $s$ on $\R^{n}$ from $V$ to $W$. Then the estimate
	$$
	\lVert D^{s-1} f \rVert_{L^{\frac{n}{n-1}}} \leq C \lVert A(D)f \rVert_{L^{1}}
	$$
	holds for $f \in C^{\infty}_{c}(\R^{n};V)$ if and only if $A(D)$ is elliptic and canceling.
\end{thm}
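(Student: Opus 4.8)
The plan is to prove the two implications separately, after fixing notation: write $A(D)=\sum_{|\alpha|=k}a_{\alpha}\partial^{\alpha}$ with homogeneous symbol $A(\xi)=\sum_{|\alpha|=k}a_{\alpha}\xi^{\alpha}\colon V\to W$ of degree $k$. Recall that $A(D)$ is \emph{elliptic} when $A(\xi)$ is injective for every $\xi\neq0$, and \emph{canceling} when $\bigcap_{\xi\neq0}A(\xi)[V]=\{0\}$. Both sides of the claimed inequality scale like $\lambda^{k-n}$ under $f\mapsto f(\lambda\cdot)$, so the estimate is dilation invariant and all the test functions below may be taken scale invariant.

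\textbf{Sufficiency (elliptic and canceling $\Rightarrow$ estimate).} First I would use ellipticity to build a parametrix: since $A(\xi)$ is injective and homogeneous, $A(\xi)^{*}A(\xi)$ is invertible for $\xi\neq0$, so $R(\xi):=(A(\xi)^{*}A(\xi))^{-1}A(\xi)^{*}$ is a left inverse, homogeneous of degree $-k$ and smooth off the origin. This gives the Fourier representation $f=R(D)A(D)f$, whence $D^{k-1}f=T(D)\,A(D)f$ for a homogeneous multiplier $T(D)$ of order $-1$. Setting $g:=A(D)f$, the target reduces to $\lVert T(D)g\rVert_{L^{n/(n-1)}}\lesssim\lVert g\rVert_{L^{1}}$, essentially $\lVert g\rVert_{\dot{W}^{-1,n/(n-1)}}\lesssim\lVert g\rVert_{L^{1}}$, for $g$ in the range of $A(D)$. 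That range is cut out by a constant coefficient annihilator $L(D)$ (with $\ker L(\xi)=A(\xi)[V]$), so $L(D)g=0$. The decisive point is that such a gain of one derivative at the $L^{1}$ endpoint is \emph{false} for unconstrained $g$ (a Riesz potential of order $1$ is not bounded $L^{1}\to L^{n/(n-1)}$), yet becomes true under $L(D)g=0$ precisely when $L(D)$ is \emph{cocanceling}, $\bigcap_{\xi\neq0}\ker L(\xi)=\{0\}$; since $\ker L(\xi)=A(\xi)[V]$, this is exactly the cancellation of $A(D)$. The proof is then closed by the endpoint inequality of Bourgain--Brezis type: if $L(D)$ is cocanceling and $L(D)g=0$, then $\lVert g\rVert_{\dot{W}^{-1,n/(n-1)}}\lesssim\lVert g\rVert_{L^{1}}$. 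I would establish this through the dual form $|\int\langle g,\phi\rangle|\lesssim\lVert g\rVert_{L^{1}}\lVert D\phi\rVert_{L^{n}}$, reducing by a partition-of-unity argument on the sphere (cocancellation means the images of $L(\xi)^{*}$ span) to the model divergence-free case, which is the Bourgain--Brezis estimate.

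\textbf{Necessity.} For cancellation, suppose $A$ is not canceling and pick $0\neq e_{0}\in\bigcap_{\xi}A(\xi)[V]$. Then $e_{0}\hat\psi(\xi)\in A(\xi)[V]$ for all $\xi$, so $A(D)f=e_{0}\psi$ is solvable for every mean-zero scalar $\psi$; taking $\psi$ a smooth dipole approximating $\delta_{x_{1}}-\delta_{x_{2}}$ keeps $\lVert A(D)f\rVert_{L^{1}}$ bounded while near $x_{1}$ the function $f$ behaves like a fundamental solution, so $D^{k-1}f\sim|x-x_{1}|^{1-n}\notin L^{n/(n-1)}$, contradicting the estimate. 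For ellipticity, if $A(\xi_{0})v_{0}=0$ with $|\xi_{0}|=1$, $v_{0}\neq0$, I would test on wave packets $f=v_{0}u$ with $\hat u$ supported in an anisotropic box around $\lambda\xi_{0}$ of length $\lambda$ along $\xi_{0}$ and transverse width $\sigma=\lambda^{\theta}$, $0<\theta<1$. Along the ray $A(t\xi_{0})v_{0}=t^{k}A(\xi_{0})v_{0}=0$, so on the box $|A(\xi)v_{0}|\lesssim\lambda^{k-1}\sigma$ by a transverse Taylor expansion, giving $\lVert A(D)f\rVert_{L^{1}}\lesssim\lambda^{k-1}\sigma\lVert u\rVert_{L^{1}}$ while $\lVert D^{k-1}f\rVert_{L^{n/(n-1)}}\gtrsim\lambda^{k-1}\lVert u\rVert_{L^{n/(n-1)}}$; for bumps adapted to the box one computes that $\lVert u\rVert_{L^{n/(n-1)}}/(\sigma\lVert u\rVert_{L^{1}})\to\infty$ as $\lambda\to\infty$ whenever $\theta<1$, contradicting the estimate.

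\textbf{Main obstacle.} The genuine analytic content is the endpoint inequality for cocanceling fields. Passing from $1<p<\infty$, where ellipticity alone yields Calder\'on--Zygmund bounds, to the critical $L^{1}$ endpoint is exactly where classical multiplier theory breaks down; the only route is to exploit the constraint $L(D)g=0$ together with cocancellation to recover the missing derivative, which is the core of the Bourgain--Brezis circle of ideas and by far the most delicate step. The bookkeeping in the two necessity constructions (the $L^{1}$ multiplier bounds for the anisotropic packets and the local singularity analysis of the fundamental solution) is routine by comparison.
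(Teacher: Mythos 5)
This theorem is not proved in the paper at all: it is quoted verbatim from \cite{V} as background for the discussion of canceling operators, so your proposal can only be compared against Van Schaftingen's original proof. Measured against that, your architecture is the right one: the parametrix $R(\xi)=(A(\xi)^{*}A(\xi))^{-1}A(\xi)^{*}$ from ellipticity, the reduction to an endpoint estimate for $g=A(D)f$ constrained by an annihilating operator $L(D)$ with $\ker L(\xi)=A(\xi)[V]$, the duality against $\lVert D\phi\rVert_{L^{n}}$, and both necessity constructions are faithful to \cite{V}. Your ellipticity counterexample in particular checks out quantitatively: with $\hat u$ adapted to a $\lambda\times\sigma^{n-1}$ box one gets $\lVert u\rVert_{L^{n/(n-1)}}\sim(\lambda\sigma^{n-1})^{1/n}\lVert u\rVert_{L^{1}}$, so the ratio grows like $(\lambda/\sigma)^{1/n}$, and your choice of a \emph{mean-zero} dipole in the cancellation argument is exactly what makes the truncation of the (non-compactly-supported) solution $f=E*e_{0}\psi$ to $C^{\infty}_{c}$ harmless, since the commutator errors then decay like $R^{-1}$.

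There is, however, a genuine gap at the one step you yourself flag as decisive. Your plan to prove the cocanceling endpoint estimate ``by a partition-of-unity argument on the sphere reducing to the model divergence-free case'' does not work as described, for two reasons. First, frequency localization is unavailable at the $L^{1}$ endpoint: smooth angular cutoffs are not $L^{1}$-bounded Fourier multipliers, and this loss is precisely the endpoint obstruction the whole theorem is about. Second, and more structurally, cocancellation, $\bigcap_{\xi\neq0}\ker L(\xi)=\{0\}$, is a condition on the intersection over \emph{all} frequencies, not a pointwise spanning condition, so there is no local-in-$\xi$ sector on which the constraint resembles divergence-freeness; a partition of unity sees only one $\xi$ at a time, where $\ker L(\xi)$ can be large. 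The actual mechanism in \cite{V} is algebraic rather than localizing: one selects finitely many frequencies $\xi_{1},\dots,\xi_{N}$ with $\bigcap_{j}\ker L(\xi_{j})=\{0\}$ (possible since intersections of subspaces stabilize) and reduces, via polynomial identities and induction, to estimates for higher-order divergence-type operators, which are in turn deduced from the first-order Bourgain--Brezis/Van Schaftingen inequality. A second, smaller omission: the existence of a \emph{polynomial} annihilator $L(D)$ with $\ker L(\xi)=A(\xi)[V]$ is asserted but not constructed; it follows from ellipticity (constant rank) via an adjugate-type formula such as $L(\xi)=\det\bigl(A(\xi)^{*}A(\xi)\bigr)\,\mathrm{id}_{W}-A(\xi)\,\mathrm{adj}\bigl(A(\xi)^{*}A(\xi)\bigr)A(\xi)^{*}$. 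With the spherical-localization idea replaced by this algebraic reduction, your outline matches the known proof; without it, the sufficiency direction is incomplete at its core.
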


Let us also underline that the theorem of Uchiyama gives the answer when the Hardy space $H^{1}(\R^{n})$ norm is equivalent to a norm given by a family of multipliers.

\begin{thm}
	\label{uchithmquant}
	Let $\theta_{1}(\xi), \dots , \theta_{n}(\xi) \in C^{\infty}(S^{n-1})$ and $K_{\theta_{i}}f =\mathcal{F}^{-1}(\theta_{i}(\frac{\xi}{|\xi|}) \mathcal{F}(f))$. Then the inequality
	$$
	\frac{1}{C} \lVert f \rVert_{H^{1}} \leq \sum_{i=1}^{n} \lVert K_{\theta_{i}} f \rVert_{L^{1}} \leq C \lVert f \rVert_{H^{1}}
	$$
	is true for some constant $C$ if and only if
	$$
	rank \begin{bmatrix}
	\theta_{1}(\xi) & \theta_{2}(\xi)  &  \dots &  \theta_{n}(\xi) \\
	\theta_{1}(-\xi) & \theta_{2}(-\xi)  &  \dots &  \theta_{n}(-\xi)
	\end{bmatrix} \equiv 2
	$$
	for  $\xi \in S^{n-1}$.
\end{thm}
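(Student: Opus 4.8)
The plan is to establish the equivalence by treating the two inequalities separately and then the sharpness of the rank condition. I first observe that the upper estimate $\sum_{i=1}^{m}\lVert K_{\theta_i}f\rVert_{L^1}\le C\lVert f\rVert_{H_1}$ holds with no hypothesis on the symbols whatsoever: each $\theta_i(\xi/|\xi|)$ is a smooth, homogeneous degree-$0$ multiplier, so $K_{\theta_i}$ is a Calder\'on--Zygmund operator and hence bounded from $H_1$ into $L^1$. Thus the entire content of the theorem is carried by the reverse inequality $\lVert f\rVert_{H_1}\le C\sum_i\lVert K_{\theta_i}f\rVert_{L^1}$ together with the claim that the rank-$2$ condition is exactly what makes it hold.

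For the reverse inequality I would dualize. By Fefferman's $H_1$--$BMO$ duality and a closed-range argument, the estimate $\lVert f\rVert_{H_1}\le C\sum_i\lVert K_{\theta_i}f\rVert_{L^1}$ is equivalent to the solvability, with uniform control, of the $BMO$ decomposition
\[
b=\sum_{i=1}^{m}K_{\theta_i}^{*}\beta_i,\qquad \beta_i\in L^{\infty},\quad \sum_i\lVert\beta_i\rVert_{L^{\infty}}\le C\lVert b\rVert_{BMO},
\]
for every $b\in BMO(\R^{n})$. This constructive decomposition is the heart of the matter. I would build the $\beta_i$ by an iteration over dyadic scales: perform a stopping-time (Calder\'on--Zygmund) decomposition of $b$, and on each piece invert the symbol in frequency. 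The rank-$2$ hypothesis enters here decisively, since it says precisely that for every $\xi$ the vectors $(\theta_i(\xi))_i$ and $(\theta_i(-\xi))_i$ are linearly independent, so the $2\times m$ symbol matrix admits a right inverse of norm bounded uniformly in $\xi$; this permits a local solution that simultaneously recovers $b$ and its Hilbert-type conjugate. The subtle requirement is that the iteration converge in $L^{\infty}$, not merely in $BMO$, which forces a good-$\lambda$ type exponential decay of the successive error terms.

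For the necessity I would argue by contraposition. Suppose the rank drops below $2$ at some $\xi_0\in S^{n-1}$, i.e. $(\theta_i(\xi_0))_i$ and $(\theta_i(-\xi_0))_i$ are linearly dependent. I would then construct test functions $f_N$ whose Fourier transforms concentrate in a shrinking cap around $\pm\xi_0$ --- modulated bumps or wave packets --- along which $\sum_i\lVert K_{\theta_i}f_N\rVert_{L^1}$ stays bounded while $\lVert f_N\rVert_{H_1}$ diverges. The mechanism is that on such a narrow frequency cap every $K_{\theta_i}$ collapses, up to a controllable error, onto a single common direction in $\mathbb{C}^{m}$, so the vector $(K_{\theta_i}f_N)_i$ is governed by one scalar Calder\'on--Zygmund operator; a single such operator cannot reproduce the cancellation that separates $H_1$ from $L^1$, and the lower bound fails.

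The main obstacle is unquestionably the constructive $BMO$ decomposition with uniform $L^{\infty}$ control. Securing convergence in $L^{\infty}$ rather than in $BMO$ is exactly where the full strength of the rank-$2$ condition is spent, and it demands the quantitative stopping-time and exponential-decay estimates that form the technical core of Uchiyama's argument; by comparison the duality reduction and the counterexample construction are routine once this ingredient is available.
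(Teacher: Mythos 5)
The paper does not prove this statement: Theorem \ref{uchithmquant} is Uchiyama's theorem, quoted as background from the cited reference \cite{U} (``A constructive proof of the Fefferman--Stein decomposition of $BMO(\R^n)$''), so there is no in-paper argument to compare yours against. Measured against the actual proof in that source, your architecture is the right one: the upper estimate is indeed free (each $K_{\theta_i}$ is a smooth homogeneous degree-$0$ multiplier, hence bounded on $H_1$), the lower estimate is equivalent by $H_1$--$BMO$ duality and closed range to the uniform decomposition $b=\sum_i K_{\theta_i}^{*}\beta_i$ with $\sum_i\lVert\beta_i\rVert_{\infty}\lesssim\lVert b\rVert_{BMO}$, and necessity is obtained by concentrating $\hat f$ near an antipodal pair $\pm\xi_0$ where the rank degenerates. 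You have also correctly located where the rank-$2$ hypothesis is spent, namely in the uniform invertibility of the symbol matrix and in forcing the iteration to converge in $L^{\infty}$ rather than merely in $BMO$.

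The gap is that the entire technical core is named but not executed. ``Perform a stopping-time decomposition and on each piece invert the symbol in frequency'' is not yet an argument: the difficulty Uchiyama overcomes is that a frequency-local right inverse of the symbol matrix produces corrections that are only in $BMO$, and the passage to genuine $L^{\infty}$ bounds requires his explicit scale-by-scale construction (approximating $b$ by sums of smooth atoms adapted to a dyadic filtration, correcting at each scale by functions built from the right inverse of the symbol, and proving exponential decay of the $L^{\infty}$ norms of the successive errors via delicate pointwise, not merely average, estimates). Nothing in your sketch indicates how the ``good-$\lambda$ type exponential decay'' would actually be obtained, and this is precisely the part of the theorem that resisted soft arguments and required Uchiyama's construction. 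The necessity direction is also only described qualitatively; making it precise requires exhibiting concrete wave packets $f_N$ with $\lVert f_N\rVert_{H_1}\to\infty$ while $\sum_i\lVert K_{\theta_i}f_N\rVert_{L^1}$ stays bounded, which involves controlling the error of the symbol's linearization over a shrinking cap. As it stands your text is a correct proof plan coinciding with the strategy of \cite{U}, but not a proof; for the purposes of this paper the appropriate move is simply to cite \cite{U}, as the authors do.
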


The above remarks suggest that the mechanism of creating singularities and validity of some norm inequalities are governed by the same phenomenon.

Conjecture \ref{ascon} was proposed by the first author in his master's thesis \cite{Ay}. Theorems \ref{mainres} and \ref{regthm} appeared in an early preprint of this paper.
In the meantime, B. Raita (independently) in \cite{Ra} posed a question analogous to Conjecture \ref{ascon} for measures solving differential equations. For the same setting, article \cite{ADHR} yielded another estimates and rectifiability results in terms of other type of antisymmetry/cancelation. Condition appearing in Theorem \ref{2dim} (result later in time than \cite{ADHR}) is close to one point of a cancelation scale from \cite{ADHR} (Example \ref{afreeex}).

\subsection{Conventions}
Throughout the paper, we use the following notation: \\
$n$ – dimension of the ambient space $\R^{n}$, \\
$d$  –  dimension of $E$, i.e. space containing values of bundle measures, \\
$m$ – parameter of the Grassmanian, i.e. dimension of its elements,\\
$l$ – degree of antisymmetry/dimension of the wave cone.

While assuming Lipschitz or H\"older continuity of a bundle we mean a corresponding property of its restriction to the unit sphere. 

By $M(\mathbb{R}^{n})$ we understand the set of finite Radon measures. For $f \in L^{2}(\R^{n})$ and $\mu \in M(\R^{n})$ we choose the following normalization of the Fourier transform:
$$
\hat{f}(\xi) = \int_{\mathbb{R}^{n}}e^{-2\pi i \langle \xi,x \rangle}f(x)dx,
$$
$$
\hat{\mu}(\xi) = \int_{\mathbb{R}^{n}}e^{-2\pi i \langle \xi,x \rangle}d\mu(x).
$$

In the paper we use the below definition of rectifiability

\begin{definition} A set $E \subset \mathbb{R}^{n}$ is called $k$-rectifiable, if there exist Lipschitz functions $f_{i} : \mathbb{R}^{k} \to \mathbb{R}^{n}, i=1,2,...,$ such that
$$
 \mathcal{H}^{k} (E \setminus \bigcup^{\infty}_{i=1}f_{i}(\mathbb{R}^{k})) = 0 .
$$
A set $F \subset \R^{n}$ is called purely $k$-unrectifiable if $\mathcal{H}^{k}(F \cap E) = 0$ for every $k$-rectifiable $E$.
We call a (scalar or vector) measure $\mu$ $k$-rectifiable if there exist a $k$-rectifiable set $E$ and a Borel function  (scalar or vector) $f$ such that $\mu = f \mathcal{H}^{k}_{\mres E}$.
\end{definition}

For a vector space $V$ and a vector $u$ we denote $p_{V}, p_{u}$ orthogonal projections on $V$ and on $span\{u\}$ respectively.
A symbol $\mathcal{D}(\R^{n})$ means for us the space of smooth functions with compact support. 
By the spectrum of a tempered Radon measure we understand the support of its distributional Fourier transform. We denote it by $spec(\cdot)$.

 $W^{s,2}$ stands for the $L^{2}$-based Sobolev space of distributions, with order of smoothness $s \in \R$.

\section{Estimates for the rectifiable part}
\subsection{Tangent measures and rectifiability}
The notion of tangent measure (see \cite{P}) is extremally useful in geometric measure theory. However, one has to be careful while using it in Fourier analysis. For example, it is not hard to construct a measure whose one of tangent measures, in the classical sense, is not a tempered distribution (see also \cite{O} for a more pathological example). In this and the next subsection we present how to preserve Fourier analytic constraints in the limit, by modifying the definition of tangency.
\begin{definition} (\cite{P})
For a given $r>0$ and a Radon measure $\mu$ we define its blow-up by the formula $\mu_{r,x}(A)=\mu(x+rA)$. Any measure $\nu$ which is a weak-$\ast$ limit in 
$M(\R^{n})$ of a sequence of the type
$$
c_{i}\mu_{x,r_{i}}
$$
for some positive $\alpha$ and $r_{i} \downarrow 0$ and $c_{i}>0$ we call tangent measure to $\mu$ at point $x$. We denote the set of those measures by $Tan(\mu, x)$.
\end{definition}

The above definition can be easily extended to vector measures (in this case, convergence is understood as the weak-$\ast$ convergence of coordinates in $M(\R^{n})$). For rectifiable measures it suffices to consider normalizaitons of blow-ups given by suitable power functions.

\begin{definition}
For a fixed $\alpha>0$, by $Tan_{\alpha}(\mu,x)$ we denote the subset of $Tan(\mu,x)$ obtained by taking $c_{i} = r_{i}^{-\alpha}$. By $Tan^{*}(\mu,x)$ and $Tan^{*}_{\alpha}(\mu,x)$ we denote subsets of $Tan(\mu,x)$ and $Tan_{\alpha}(\mu,x)$, respectively, consisting of tempered Radon measures which are limits of blow-ups in $\mathcal{S}'(\R^{n})$.
\end{definition}
 
A straightforward generalization of Theorem 4.8 from \cite{D}  or Theorem 2.83 from \cite{AFP} is 
\begin{thm}
\label{rectchar}
Let $\mu =  f \mathcal{H}^{k}_{\mres E}$ be a $k$-rectifiable vector measure. Then for $ \mathcal{H}^{k}$-a.e. $x \in E$
there exists a $k-$dimensional vector space $V_{x}$ such that
$$
r^{-k}\mu_{x,r} \rightarrow f(x)\mathcal{H}^{k}_{\mres V_{x}}, 
$$ 
in the weak-$\ast$ topology as $r \downarrow  0$.
\end{thm}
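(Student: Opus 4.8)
The plan is to reduce to the scalar rectifiability theorem cited just before the statement (Theorem 2.83 in \cite{AFP} / Theorem 4.8 in \cite{D}) and then incorporate the density $f$ through a Lebesgue point argument. Since weak-$\ast$ convergence of vector measures is understood coordinatewise, I would work with each scalar component $f_{j}$ of $f$ separately (equivalently, control $|f(z)-f(x)|$ in the norm of $E$), so that it suffices to establish the statement for scalar densities and then assemble the coordinates.

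First I would invoke the structure of rectifiable sets: for $\mathcal{H}^{k}$-a.e.\ $x \in E$ there is a $k$-dimensional approximate tangent plane $V_{x}$ with
$$
r^{-k}(\mathcal{H}^{k}_{\mres E})_{x,r} \rightharpoonup \mathcal{H}^{k}_{\mres V_{x}} \qquad (r \downarrow 0),
$$
and simultaneously the $k$-density of $\mathcal{H}^{k}_{\mres E}$ at $x$ equals $1$; this is exactly the cited scalar result. Next, because $\mu$ is a finite vector measure, each $f_{j}$ lies in $L^{1}(\mathcal{H}^{k}_{\mres E})$, so the Lebesgue differentiation theorem for Radon measures (valid in $\R^{n}$ via the Besicovitch covering theorem, requiring no doubling hypothesis) yields that $\mathcal{H}^{k}_{\mres E}$-a.e.\ point $x$ is a Lebesgue point:
$$
\lim_{\rho \downarrow 0}\frac{1}{\mathcal{H}^{k}_{\mres E}(B(x,\rho))}\int_{B(x,\rho)}|f(z)-f(x)|\,d\mathcal{H}^{k}_{\mres E}(z)=0.
$$
I would then restrict attention to the set of full $\mathcal{H}^{k}$-measure where all of these properties hold at once.

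Fix such an $x$ and a test function $g \in C_{c}(\R^{n})$ with $\operatorname{supp} g \subset B(0,R)$. Using the change of variables $\int g\,d(r^{-k}\mu_{x,r}) = r^{-k}\int g\big(\tfrac{z-x}{r}\big)f(z)\,d\mathcal{H}^{k}_{\mres E}(z)$, I split the integrand by writing $f(z)=f(x)+(f(z)-f(x))$. The leading term equals $f(x)\cdot r^{-k}\int g\big(\tfrac{z-x}{r}\big)\,d\mathcal{H}^{k}_{\mres E}(z)$, which converges to $f(x)\int g\,d\mathcal{H}^{k}_{\mres V_{x}}$ by the scalar blow-up above. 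The remainder is bounded by $\|g\|_{\infty}\cdot r^{-k}\int_{B(x,Rr)}|f(z)-f(x)|\,d\mathcal{H}^{k}_{\mres E}(z)$, which I factor as $\big(r^{-k}\mathcal{H}^{k}_{\mres E}(B(x,Rr))\big)$ times the Lebesgue-point average over $B(x,Rr)$: the first factor stays bounded (it tends to $\omega_{k}R^{k}$ by the density-$1$ property), while the second tends to $0$, so the remainder vanishes. This gives $\int g\,d(r^{-k}\mu_{x,r}) \to f(x)\int g\,d\mathcal{H}^{k}_{\mres V_{x}}$ for every such $g$, i.e.\ the claimed weak-$\ast$ convergence.

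The only genuinely non-formal point, and the step I would treat most carefully, is ensuring that the three a.e.\ conditions (existence of the approximate tangent plane, density equal to $1$, and the Lebesgue-point property of $f$) are all realized on a single set of full $\mathcal{H}^{k}$-measure, and that the uniform density bound supplies the local mass control needed to pass from testing against compactly supported $g$ to genuine weak-$\ast$ convergence in $M(\R^{n})$. Everything else is the standard scalar argument applied coordinatewise, which is precisely why the statement is a \emph{straightforward} generalization of the cited theorems.
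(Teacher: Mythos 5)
Your proof is correct, and it is exactly the argument the paper intends: the paper offers no proof of this statement, merely asserting it as a ``straightforward generalization'' of Theorem 2.83 in \cite{AFP} (resp.\ Theorem 4.8 in \cite{D}), and your coordinatewise reduction with the Lebesgue-point splitting $f(z)=f(x)+(f(z)-f(x))$, the density-one bound on $r^{-k}\mathcal{H}^{k}_{\mres E}(B(x,Rr))$, and the intersection of the three full-measure sets is the standard way to supply the missing details. Note also that the paper itself only claims convergence tested on $C_{c}(\R^{n})$ (it says so immediately after the statement, upgrading to $\mathcal{S}'$ separately via Lemma~\ref{convlem}), so your final worry about passing to convergence against unbounded-support test functions is not needed here.
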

\begin{cor}\label{decay}If $\mu \in M(\R^{n})$ is a $k$-rectifiable measure, then for $\mathcal{H}^{k}$-a.e. x there exists $C_{x}>0$ such that $\lvert \mu \rvert (B(x,r)) \leq C_{x}r^{k}$.
\end{cor}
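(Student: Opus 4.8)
The plan is to reduce the statement to the scalar blow-up theorem applied to the total variation measure and then invoke the standard upper semicontinuity of positive measures on compact sets. Writing $\mu = f\,\mathcal{H}^{k}_{\mres E}$ as in Theorem~\ref{rectchar}, the first observation is that the total variation $|\mu| = |f|\,\mathcal{H}^{k}_{\mres E}$ is itself a \emph{positive} $k$-rectifiable measure, carried by the same set $E$ with nonnegative density $|f|$. This is the crucial point: one should resist the temptation to pass to the limit directly in the vector blow-ups and then take total variations, since total variation is only lower semicontinuous under weak-$\ast$ convergence, which is the wrong direction for an upper density bound.

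Applying Theorem~\ref{rectchar} to the scalar measure $|\mu|$, I obtain for $\mathcal{H}^{k}$-a.e. $x \in E$ a $k$-dimensional subspace $V_{x}$ with
\[
\lambda_{r} := r^{-k}\,|\mu|_{x,r} \;\longrightarrow\; \lambda := |f(x)|\,\mathcal{H}^{k}_{\mres V_{x}}
\]
weakly-$\ast$ as $r \downarrow 0$, where now both $\lambda_{r}$ and $\lambda$ are positive measures. Since $|\mu|_{x,r}(A) = |\mu|(x + rA)$, one has $\lambda_{r}(\overline{B(0,1)}) = r^{-k}\,|\mu|(\overline{B(x,r)})$, while $\lambda(\overline{B(0,1)}) = |f(x)|\,\mathcal{H}^{k}(V_{x}\cap\overline{B(0,1)}) = \omega_{k}\,|f(x)|$, with $\omega_{k}$ the $\mathcal{H}^{k}$-measure of a unit $k$-ball.

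Next I use that for a weakly-$\ast$ convergent sequence of positive Radon measures $\lambda_{r_{n}} \to \lambda$ one has $\limsup_{n} \lambda_{r_{n}}(K) \leq \lambda(K)$ for every compact $K$ (the compact-set half of the portmanteau theorem, obtained by dominating $\mathbf{1}_{K}$ by a test function in $\mathcal{D}(\R^{n})$ supported slightly beyond $K$). Choosing a sequence $r_{n}\downarrow 0$ realizing the upper limit and applying this with $K = \overline{B(0,1)}$ yields the finite upper density bound
\[
\limsup_{r \downarrow 0}\, r^{-k}\,|\mu|(\overline{B(x,r)}) \;\leq\; \omega_{k}\,|f(x)| \;<\; \infty
\]
for $\mathcal{H}^{k}$-a.e. $x \in E$.

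Finally I extract the uniform-in-$r$ estimate from the finiteness of the upper density. For such $x$ there is $r_{x} > 0$ with $|\mu|(B(x,r)) \leq (\omega_{k}|f(x)| + 1)\,r^{k}$ for all $0 < r < r_{x}$; for $r \geq r_{x}$ the finiteness of total mass gives $|\mu|(B(x,r)) \leq |\mu|(\R^{n}) \leq \big(|\mu|(\R^{n})/r_{x}^{k}\big)\,r^{k}$. Taking $C_{x}$ to be the larger of the two constants produces the claimed inequality $|\mu|(B(x,r)) \leq C_{x} r^{k}$ for all $r > 0$. The only genuinely delicate point is the one flagged above, namely arranging matters so that the relevant semicontinuity runs in the correct direction; everything else is routine bookkeeping.
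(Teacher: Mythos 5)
Your proof is correct and follows the route the paper intends: the corollary is stated as an immediate consequence of Theorem~\ref{rectchar}, deduced via weak-$\ast$ convergence of the normalized blow-ups, and your care in applying the theorem to the positive measure $\lvert \mu \rvert = \lvert f\rvert\,\mathcal{H}^{k}_{\mres E}$ (so that upper semicontinuity on compact sets is available) is exactly the right way to make that deduction rigorous. Nothing further is needed.
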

Convergence from Theorem~\ref{rectchar}  is tested on functions from $C_{c}(\R^{n})$. However, for our applications we need convergence in $\mathcal{S}'(\R^{n})$. This requires extending the class of test functions to $\mathcal{S}(\R^{n})$ and can be achieved with the following lemma:
\begin{lem}
\label{convlem}
 Suppose that $\mu \in M(\R^{n})$ and $\alpha > 0$. Then: a) $Tan^{*}_{\alpha}(\mu,x) = Tan_{\alpha}(\mu,x) $.
 b) If $g \in L^{1}(\mu)$ then $Tan^{*}_{\alpha}(g\mu, x) = g(x)Tan^{*}_{\alpha}(\mu, x)$ for $\mu$-a.e. $x$.
\end{lem}
\begin{proof}
Let us notice first that b) is implied by a) and an analogous property of $Tan_{\alpha}(\mu,x)$ (see Proposition 3.12. in \cite{D}). 

To prove a) it suffices to show that if $r^{-\alpha}\mu_{x,r} \to \nu$ in $M(\R^{n})$, then also $r^{-\alpha}\mu_{x,r} \to \nu$ in $\mathcal{S}'(\R^{n})$ (reverse inclusion is obvious). Choose any $\varphi \in \mathcal{S}(\R^{n})$. We can write $\varphi = \sum_{i=1}^{\infty} \varphi_{i}$, where $\varphi_{i} \in C^{\infty}$, $supp(\varphi_{i}) \subset B(0,i)\setminus B(0,i-1)$ for $i>1$ and $supp(\varphi_{1}) \subset B(0,1)$. Moreover, we can assume that $\lVert \varphi_{i} \rVert_{\infty} \leq \lVert \varphi_{|B(0,i)\setminus B(0,i-1)} \rVert_{\infty}$. Then
$$
\bigg{\lvert}\frac{1}{r^{\alpha}} \int \varphi d\mu_{x,r} - \int \varphi d\nu \bigg{\rvert} \leq \bigg{\lvert}\frac{1}{r^{\alpha}} \int  \sum_{i=1}^{j} \varphi_{i} d\mu_{x,r} - \int  \sum_{i=1}^{j} \varphi_{i} d\nu \bigg{\rvert} +
$$
$$
 + \bigg{\lvert}\frac{1}{r^{\alpha}} \int  \sum_{i>j} \varphi_{i} d\mu_{x,r} \bigg{\rvert} + \int  \sum_{i>j} |\varphi_{i}| d|\nu|.
$$
Second term can be majorized by
$$
\sum_{i>j} \frac{\lVert \varphi_{i} \rVert_{\infty}|\mu|(B(x,ir))}{r^{\alpha}} \leq C\sum_{i>j}i^{\alpha} \lVert \varphi_{i} \rVert_{\infty} ,
$$
(we used Corollary~\ref{decay}) and the third one is a tail of a convergent series.
After taking sufficiently big $j$ and then choosing suitable $r_{0}$, we see that for $r<r_{0}$ the starting expression is smaller than
any a priori given positive number.
\end{proof}
\subsection{Distributional definition of bundle measures}
We say that a bundle $\phi:\mathbb{R}^{n} \setminus \{0\} \to \G(m,E)$ is $C^{\infty}$ if (locally)
$\phi(x) = span\{e_{1}(x),...,e_{m}(x)\}$, where $(e_{1}(x),...,e_{m}(x))$ is an orthonormal system and $e_{i}(x)$ are $C^{\infty}$ functions.
For a bundle $\phi$ we can define pointwise its orthogonal complement by $\phi^{\bot }(x):=\phi(x)^{\bot}$. 
Of course, if $\phi$ is $C^{\infty}$, then so is $\phi^{\bot}$ (one can see it while applying Gram-Schmidt  orthogonalization). In this section all bundles are $C^{\infty}$.

\begin{definition}
For a $C^{\infty}$-bundle $\phi$, by $\mathcal{S}_{\phi}(\mathbb{R}^{n})$ we denote the set of vector valued Schwartz functions $f$ such that $f(x) \in \phi(x)$ for $x \in \mathbb{R}^{n} \setminus \{ 0 \}$.
\end{definition}

\begin{definition}
By $\mathcal{S}'_{\phi}(\mathbb{R}^{n})$ we understand the class of vectors of tempered distributions $(\Lambda_{1},...,\Lambda_{d})$ ($d$ = $\dim E$) satisfying
$$
\sum_{i=1}^{d} \langle \hat{\Lambda_{i}},f_{i} \rangle = 0
$$
for an arbitrary $(f_{1},...,f_{d}) \in \mathcal{S}_{\phi^{\bot}}(\mathbb{R}^{n})$ . This is equivalent to
$$
\sum_{i=1}^{d} \langle \Lambda_{i},\hat{f_{i}} \rangle = 0.
$$
\end{definition}

Further we prove that this  class contains bundle measures and that it is preserved by taking limits of blow-up processes. We use Parseval's identity (see \cite{K}, p. 145):
\begin{thm}
\label{parseval}
  If $\mu \in M(\mathbb{R}^{n}, E)$ and $f \in \mathcal{S}(\mathbb{R}^{n})$, then
  $$
  \langle f,\mu \rangle = \int f(x) d\mu(x) =\int \hat{f}(\xi) \hat{\mu}(-\xi) d\xi.
  $$
\end{thm}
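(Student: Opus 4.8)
The plan is to prove this standard Parseval-type identity by unwinding the definition of $\hat{\mu}$ on the right-hand side and reducing everything to the Fourier inversion formula for Schwartz functions via Fubini's theorem. First I would substitute $\hat{\mu}(-\xi) = \int_{\R^{n}} e^{2\pi i \langle \xi, x \rangle} \, d\mu(x)$ into the integral on the right, which turns it into the double integral
$$
\int_{\R^{n}} \hat{f}(\xi) \left( \int_{\R^{n}} e^{2\pi i \langle \xi, x \rangle} \, d\mu(x) \right) d\xi .
$$

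Next I would interchange the order of integration. This is the one step requiring justification: since $f \in \mathcal{S}(\R^{n})$, its transform $\hat{f}$ is again Schwartz and in particular $\hat{f} \in L^{1}(\R^{n})$, while $\mu$ is a finite Radon measure, so $|\mu|(\R^{n}) < \infty$. Consequently the integrand has absolute value bounded by $|\hat{f}(\xi)|$, which is integrable against the product measure $d|\mu|(x)\,d\xi$, and Fubini's theorem applies. After swapping we obtain
$$
\int_{\R^{n}} \left( \int_{\R^{n}} \hat{f}(\xi) \, e^{2\pi i \langle \xi, x \rangle} \, d\xi \right) d\mu(x).
$$
I would then recognize the inner integral as the inverse Fourier transform of $\hat{f}$ evaluated at $x$, which by the inversion theorem for Schwartz functions (valid pointwise under the present normalization of $\mathcal{F}$) equals $f(x)$. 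Substituting this yields $\int_{\R^{n}} f \, d\mu = \langle f, \mu \rangle$, as claimed.

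The only genuine obstacle is the Fubini justification, and even that is entirely routine given the rapid decay of $\hat{f}$ together with the finiteness of $\mu$; once the interchange is licensed, the remainder is a direct appeal to Fourier inversion and bookkeeping with the normalization constants. I therefore expect no essential difficulty beyond confirming the integrability bound above.
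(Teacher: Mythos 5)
Your proof is correct; the paper itself gives no argument for this statement, citing it directly from Katznelson's book, and your Fubini-plus-inversion argument is the standard proof found there. The integrability bound $|\hat f(\xi)e^{2\pi i\langle\xi,x\rangle}|=|\hat f(\xi)|\in L^1(d\xi\,d|\mu|(x))$ is exactly the right justification, and the sign convention $\hat\mu(-\xi)=\int e^{2\pi i\langle\xi,x\rangle}\,d\mu(x)$ matches the paper's normalization, so the inner integral is indeed $f(x)$ by inversion.
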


\begin{lem}
\label{conslem}
Let $\mu \in M_{\phi}(\mathbb{R}^{n},E)$. If at some point $x$ there exists a  tangent (vector) measure  $\nu \in Tan^{*}(\mu,x)$, then it belongs to $\mathcal{S}'_{\phi}(\mathbb{R}^{n})$.
\end{lem}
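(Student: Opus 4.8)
The plan is to exploit two facts: first, that every normalized blow-up $c_n\mu_{x,r_n}$ already satisfies the defining constraints of the class (indeed it is itself a bundle measure in $M_\phi$), and second, that these constraints are preserved under weak-$\ast$ limits in $\mathcal{S}'(\R^n)$. Since by the definition of $Tan^*(\mu,x)$ there is a sequence $c_n\mu_{x,r_n}\to\nu$ in $\mathcal{S}'(\R^n)$ with $r_n\downarrow 0$ and $c_n>0$, combining the two facts yields the claim.

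For the first fact I would compute the Fourier transform of a blow-up directly. As $\mu$ is finite, so is each $\mu_{x,r}$, and the change of variables $y=x+rz$ gives
$$
\widehat{\mu_{x,r}}(\xi) = e^{2\pi i \langle \xi/r,\,x\rangle}\,\hat{\mu}(\xi/r).
$$
The scalar prefactor has modulus one, and $\hat{\mu}(\xi/r)\in\phi(\xi/r)$; by homogeneity of $\phi$, since $\xi/r$ is a positive multiple of $\xi$, we have $\phi(\xi/r)=\phi(\xi)$, whence $\widehat{\mu_{x,r}}(\xi)\in\phi(\xi)$ and $c_n\mu_{x,r_n}\in M_\phi$. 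Translating this into the distributional formulation: for any $g=(g_1,\dots,g_m)\in\mathcal{S}_{\phi^{\bot}}(\R^n)$ the integrand $\sum_i\widehat{(\mu_{x,r})_i}(\xi)\,g_i(\xi)$ vanishes pointwise, because $\widehat{\mu_{x,r}}(\xi)\in\phi(\xi)$ is orthogonal to $g(\xi)\in\phi^{\bot}(\xi)$; integrating shows $\sum_i\langle\widehat{(\mu_{x,r})_i},g_i\rangle=0$, i.e. $c_n\mu_{x,r_n}\in\mathcal{S}'_\phi(\R^n)$.

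For the second fact, fix $g\in\mathcal{S}_{\phi^{\bot}}(\R^n)$. Coordinatewise convergence $c_n(\mu_{x,r_n})_i\to\nu_i$ in $\mathcal{S}'(\R^n)$ together with continuity of the Fourier transform on $\mathcal{S}'(\R^n)$ gives $\widehat{c_n(\mu_{x,r_n})_i}\to\hat{\nu}_i$ in $\mathcal{S}'(\R^n)$; pairing against the fixed Schwartz function $g_i$ is then continuous, so
$$
\sum_{i=1}^{m}\langle\hat{\nu}_i,g_i\rangle
=\lim_{n\to\infty}\sum_{i=1}^{m}\langle\widehat{c_n(\mu_{x,r_n})_i},g_i\rangle
=0.
$$
As $g\in\mathcal{S}_{\phi^{\bot}}(\R^n)$ was arbitrary, this is exactly the condition $\nu\in\mathcal{S}'_\phi(\R^n)$.

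The one point requiring care — and the reason for working with $Tan^*$ rather than $Tan$ — is that the argument needs genuine convergence in $\mathcal{S}'(\R^n)$, not merely in $M(\R^n)$: the defining functionals of $\mathcal{S}'_\phi$ involve the Fourier transforms $\hat{\nu}_i$, which are continuous only with respect to the tempered-distribution topology. This is precisely what the hypothesis $\nu\in Tan^*(\mu,x)$ (with the upgrade provided by Lemma~\ref{convlem}) guarantees. The remaining ingredients — the change of variables, the homogeneity cancellation $\phi(\xi/r)=\phi(\xi)$, and the pointwise orthogonality between $\phi$ and $\phi^{\bot}$ — are elementary.
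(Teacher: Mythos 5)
Your proposal is correct and follows essentially the same route as the paper's proof: the same Fourier-transform computation showing each blow-up stays in $M_{\phi}$ by homogeneity, the same pointwise-orthogonality/Parseval argument placing the blow-ups in $\mathcal{S}'_{\phi}(\R^{n})$, and the same passage to the limit against fixed test functions using the $\mathcal{S}'$-convergence built into the definition of $Tan^{*}$. No gaps.
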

\begin{proof}

\textbf{Step 1.} We have $c_{k}\mu_{x,r_{k}} \in M_{\phi}(\mathbb{R}^{n},E)$: \\
Indeed, for a fixed coordinate $\mu^{(j)}$ we have
$$
c_{k}\widehat{\mu}_{x,r_{k}}^{(j)} (\xi) = c_{k} \int_{\mathbb{R}^{n}} e^{-2\pi i \langle \xi,\frac{y-x}{r_{k}} \rangle} d\mu^{(j)}(y) =
$$
$$
c_{k} e^{2\pi i \langle \xi, \frac{x}{r_{k}} \rangle} \int_{\mathbb{R}^{n}} e^{-2\pi i \langle \frac{\xi}{r_{k}}, y \rangle} d\mu^{(j)}(y) =
c_{k} e^{2\pi i \langle \xi, \frac{x}{r_{k}} \rangle} \widehat{\mu}^{(j)}\Big{(}\frac{\xi}{r_{k}}\Big{)}, 
$$
hence $c_{k}\widehat{\mu}_{x,r_{k}} (\xi) \parallel \widehat{\mu}(\xi) $.
\\

\textbf{Step 2.} If $\mu = (\mu_{1},...,\mu_{d}) \in M_{\phi}(\mathbb{R}^{n},E)$, then $\nu \in \mathcal{S}'_{\phi}(\mathbb{R}^{n})$: \\ Let $(f_{1},...,f_{d}) \in \mathcal{S}_{\phi^{\perp}}(\mathbb{R}^{n})$. By Parseval's identity we have

\[
\sum_{i=1}^{d} \langle \mu_{i},\hat{f_{i}} \rangle =  \int \sum_{i=1}^{d} f_{i}(\xi)\hat{\mu}_{i}(\xi) d\xi = 0 ,
\]
because $(f_{1},...,f_{d})(\xi)$ and $c(\xi)(\hat{\mu}_{1},...,\hat{\mu}_{d})(\xi)$ are perpendicular at each $\xi \neq 0$.

\textbf{Step 3.} Let $(f_{1},...,f_{d}) \in \mathcal{S}_{\phi^{\perp}}(\mathbb{R}^{n})$. Then
\[
0 = \lim_{r_{j} \downarrow 0} r_{j}^{-\alpha} \sum_{i=1}^{d} \langle \mu_{x,r_{j}}^{(i)}, \hat{f_{i}} \rangle = \sum_{i=1}^{d} \langle \nu^{(i)}, \hat{f_{i}}\rangle.
\]
\end{proof}

\subsection{Proof of Theorem \ref{mainres}}

We begin with invoking a well-known fact, whose proof can be found in \cite{H}  (Theorem 7.1.25).
\begin{lem}
\label{hmlem}
If $V \subset \mathbb{R}^{n}$ is a $k$-dimensional linear subspace, then $\mathcal{H}^{k}_{\mres V} \hat{ }=  \mathcal{H}^{n-k}_{\mres V^{\bot}}$.
\end{lem}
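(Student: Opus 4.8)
The plan is to reduce to a coordinate subspace by orthogonal invariance and then to compute the Fourier transform through a tensor-product factorization. First I would record that $\mathcal{H}^k_{\mres V}$ is a tempered distribution: it has polynomial growth, since $\mathcal{H}^k(V \cap B(0,R)) = c_k R^k$, so both sides of the claimed identity are well-defined objects in $\mathcal{S}'(\mathbb{R}^n)$ and the distributional Fourier transform on the left makes sense.

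For the reduction, I would pick $R \in O(n)$ with $R(\mathbb{R}^k \times \{0\}) = V$. Since orthogonal maps preserve Hausdorff measure and orthogonal complements, $R_\# \mathcal{H}^k_{\mres (\mathbb{R}^k\times\{0\})} = \mathcal{H}^k_{\mres V}$ and $R((\mathbb{R}^k\times\{0\})^\perp) = V^\perp$. Because the Fourier transform commutes with orthogonal changes of variable in $\mathcal{S}'(\mathbb{R}^n)$, it suffices to treat the standard case $V = \mathbb{R}^k \times \{0\}$, so that $V^\perp = \{0\} \times \mathbb{R}^{n-k}$.

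Writing $x = (x', x'') \in \mathbb{R}^k \times \mathbb{R}^{n-k}$, in this position one has $\mathcal{H}^k_{\mres V} = \mathcal{L}^k_{x'} \otimes \delta_0^{x''}$, where $\mathcal{L}^k$ denotes $k$-dimensional Lebesgue measure. The key computation is then the tensor-product rule $\widehat{S \otimes T} = \hat{S} \otimes \hat{T}$ in $\mathcal{S}'$, valid with our normalization because the kernel $e^{-2\pi i \langle \xi, x\rangle}$ factors across the splitting. Combined with the two one-variable facts $\widehat{\mathcal{L}^k} = \delta_0$ (Fourier inversion) and $\widehat{\delta_0} = \mathcal{L}^{n-k}$ (direct evaluation on $\hat\varphi$), this gives
$$\widehat{\mathcal{H}^k_{\mres V}} = \widehat{\mathcal{L}^k} \otimes \widehat{\delta_0} = \delta_0^{\xi'} \otimes \mathcal{L}^{n-k}_{\xi''} = \mathcal{H}^{n-k}_{\mres V^\perp},$$
which is the assertion.

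The main obstacle is purely a matter of distributional bookkeeping rather than geometry: one must justify the tensor-product factorization of the Fourier transform in $\mathcal{S}'(\mathbb{R}^n)$ (for instance by verifying it on the dense class of test functions of the form $\varphi(x')\psi(x'')$ and extending by continuity) as well as the commutation of the transform with orthogonal maps. Both facts are standard, but they should be stated with care precisely because $\mathcal{H}^k_{\mres V}$ is a genuine distribution of infinite mass, not an integrable function, so that no naive pointwise manipulation is available.
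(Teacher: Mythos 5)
Your argument is correct: the reduction to $V=\mathbb{R}^k\times\{0\}$ by orthogonal invariance, the factorization $\mathcal{H}^{k}_{\mres V}=\mathcal{L}^k\otimes\delta_0$, and the tensor-product rule $\widehat{S\otimes T}=\hat S\otimes\hat T$ yield the identity with no stray constants under the paper's $e^{-2\pi i\langle\xi,x\rangle}$ normalization, and the distributional caveats you flag (temperedness via $\mathcal{H}^k(V\cap B(0,R))=c_kR^k$, density of product test functions) are exactly the right ones. The paper gives no proof of this lemma at all --- it cites H\"ormander, Theorem 7.1.25 --- so your write-up simply supplies the standard argument behind that reference.
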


Now, by using Lemma~\ref{convlem}, we can reduce our considerations to the case of flat measures.

\begin{lem}
\label{csphlem}
Suppose that a measure $\mu \in M_{\phi}(\R^{n}, E )$ has a tangent measure (from $Tan^{*}$) of the form
$v\mathcal{H}^{k}_{\mres V}$,
where $\dim V = k$, and $	v$ is some non-zero vector. Then $\phi \equiv span \{v\}$  on $V^{\perp } \setminus \{0\}$.
\end{lem}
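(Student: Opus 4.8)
The plan is to convert the distributional bundle constraint satisfied by $\nu$ into a single integral identity over $V^{\perp}$, and then to break that identity with a localized test section unless the fibre of $\phi$ already coincides with $\operatorname{span}\{v\}$ everywhere on $V^{\perp}\setminus\{0\}$.

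First I would invoke Lemma~\ref{conslem}: since $\nu\in Tan^{*}(\mu,x)$ with $\mu\in M_{\phi}(\R^{n})$, we get $\nu\in\mathcal{S}'_{\phi}(\R^{n})$. Writing $v=(v_{1},\dots,v_{m})$ for the coordinates of $v$ in $E$ and applying Lemma~\ref{hmlem} coordinatewise, the Fourier transform is $\hat\nu_{i}=v_{i}\,\mathcal{H}^{n-k}_{\mres V^{\perp}}$. Plugging an arbitrary $f=(f_{1},\dots,f_{m})\in\mathcal{S}_{\phi^{\perp}}(\R^{n})$ into the defining relation $\sum_{i}\langle\hat\nu_{i},f_{i}\rangle=0$ and using $\sum_{i}v_{i}f_{i}=\langle v,f\rangle$ then yields
\[
\int_{V^{\perp}}\langle v, f(\xi)\rangle\, d\mathcal{H}^{n-k}(\xi)=0
\qquad\text{for every } f\in\mathcal{S}_{\phi^{\perp}}(\R^{n}).
\]
This is the only identity the argument will exploit.

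Next I argue by contradiction. Since $\phi$ is a line bundle and $v\neq 0$, the equality $\operatorname{span}\{v\}=\phi(\xi_{0})$ fails at some $\xi_{0}\in V^{\perp}\setminus\{0\}$ precisely when $v\notin\phi(\xi_{0})$, i.e.\ when there is $w_{0}\in\phi^{\perp}(\xi_{0})$ with $\langle v,w_{0}\rangle\neq 0$. Because $\phi$, and hence $\phi^{\perp}$, is $C^{\infty}$ on $\R^{n}\setminus\{0\}$, the section $\xi\mapsto p_{\phi^{\perp}(\xi)}(w_{0})$ is smooth near $\xi_{0}$, takes values in $\phi^{\perp}(\xi)$, and equals $w_{0}$ at $\xi_{0}$; thus $\xi\mapsto\langle v,p_{\phi^{\perp}(\xi)}(w_{0})\rangle$ is smooth and equal to $\langle v,w_{0}\rangle\neq 0$ at $\xi_{0}$, so it keeps a fixed sign on a ball $U\ni\xi_{0}$ with $0\notin U$. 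Choosing a nonnegative $\chi\in\mathcal{D}(\R^{n})$ supported in $U$ with $\chi(\xi_{0})>0$ and setting $f(\xi)=\chi(\xi)\,p_{\phi^{\perp}(\xi)}(w_{0})$ produces a genuine element of $\mathcal{S}_{\phi^{\perp}}(\R^{n})$: it is smooth, compactly supported away from the origin, and fibrewise valued in $\phi^{\perp}$. For this $f$ the integrand $\chi(\xi)\langle v,p_{\phi^{\perp}(\xi)}(w_{0})\rangle$ has one sign and is bounded away from $0$ on a relatively open piece of $V^{\perp}$ around $\xi_{0}$, a set of positive $\mathcal{H}^{n-k}$-measure; hence the displayed integral is nonzero, a contradiction. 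Therefore $v\in\phi(\xi)$ for all $\xi\in V^{\perp}\setminus\{0\}$, and since each fibre is one-dimensional this forces $\phi\equiv\operatorname{span}\{v\}$ on $V^{\perp}\setminus\{0\}$.

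The one delicate step is the construction of this test section: one must respect the global constraint $f(\xi)\in\phi^{\perp}(\xi)$ on all of $\R^{n}$ while localizing the contribution over $V^{\perp}$ near $\xi_{0}$ with a controlled sign. Projecting the constant field $w_{0}$ onto the fibres $\phi^{\perp}(\xi)$ handles this cleanly, provided the support is kept away from the origin so that the smoothness of $\phi$ on $\R^{n}\setminus\{0\}$ is available. Everything else is an immediate consequence of Lemmas~\ref{conslem} and~\ref{hmlem}.
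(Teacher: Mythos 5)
Your proof is correct and follows essentially the same route as the paper's: both reduce the statement via Lemmas~\ref{conslem} and~\ref{hmlem} to the identity $\int_{V^{\perp}}\langle v,f\rangle\,d\mathcal{H}^{n-k}=0$ for all $f\in\mathcal{S}_{\phi^{\perp}}(\R^{n})$, and then contradict it with a test section supported near a point $\xi_{0}$ where $v\notin\phi(\xi_{0})$. Your explicit construction $f(\xi)=\chi(\xi)\,p_{\phi^{\perp}(\xi)}(w_{0})$ simply makes concrete the step the paper phrases as ``take any $g\in\mathcal{S}_{\phi^{\perp}}$ with $g(x_{0})=w$ and multiply by a mollifier.''
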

\begin{proof}
Let us take any vector-valued function $F \in \mathcal{S}_{\phi^{\perp}}(\mathbb{R}^{n})$. Then, by the preceeding lemma and the definition of $\mathcal{S}_{\phi^{\perp}}(\mathbb{R}^{n})$ we obtain:
$$
 \int_{\mathbb{R}^{n}} \langle F(x),v \rangle d\mathcal{H}^{n-k}_{\mres V^{\bot }}(x) = 0
$$ 
(here, brackets under integral sign mean standard scalar product in $\R^{d}$). Let us assume that at some $x_{0} \in V^{\perp } \setminus \{0\}$ we have $\phi(x_{0}) \neq span\{ v\}$. This implies the existence of $w \in \phi^{\bot}(x_{0})$ such that $\langle w, v \rangle \neq 0$, say  $\langle w, v \rangle > 0$. Take any function $g \in  \mathcal{S}_{\phi^{\perp}}(\mathbb{R}^{n})$ such that $g(x_{0}) = w$. Obviously, $\langle g(x), v \rangle > 0$ in some neighbourhood $U_{x_{0}}$ of $x_{0}$. After multiplying $g$ coordinatewise by a suitable mollifier supported at $U_{x_{0}}$ and substituting it in place of $F$ we get a contradiction.
\end{proof}

Our efforts may be summarized as follows:

\begin{lem}
\label{rectdim}
Suppose that $\mu$ is subordinated to $\phi$ and has at $x$ non-zero tangent measure (from $Tan^{*}$) of the form $v\mathcal{H}^{k}_{\mres V}$, where $\dim V = k$. Then we have
$$
k \geq n-a(\phi).
$$
\end{lem}
\begin{proof}
	By using Lemma~\ref{csphlem} we get $\dim V^{\perp} \leq a(\phi)$.
\end{proof}

Now we prove the main result of this chapter.

\begin{proof}\textit{of Theorem \ref{mainres}} a) Let $\nu$ and $F$ be such that $\mu_{\mres F} = \nu$ and $\nu$ is $k$-rectifiable. By Lemma~\ref{convlem} b) we can assume that $\mu=\nu$, since we can take $g = \chi_{F}$. Then, the unique tangent measure at a generic point $x$ is of the form $f(x) \mathcal{H}^{k}_{\mres V_{x}}$, where $f(x)$ is the density with respect to the Hausdorff measure and $V_{x}$ is the tangent plane to $\mu$ at $x$. The result follows from Lemma~\ref{rectdim}.
	
b) Let $\mu = f d\mathcal{H}^{k}_{\mres E}$ be such a measure and assume $k<\frac{n}{2}$. Observe that, by Theorem~\ref{rectchar}, Lemma~\ref{convlem} a) and Lemma~\ref{csphlem} for $\mathcal{H}^{k}_{\mres E}$-a.e. $x$ from the set $\{y: f(y) \neq 0\}$ we have $\phi \equiv span\{f(x)\}$ on $V^{\perp}_{x} \setminus \{0\}$. But $\dim V^{\perp}_{x} > \frac{n}{2}$, which means $V^{\perp}_{x} \cap V^{\perp}_{y} \neq \{0\}$ and consequently $span\{f(x)\} = span\{f(y)\}$ for any two such points. Hence, the density $f(x)$ is $\mathcal{H}^{k}_{\mres E}$ - a.e. parallel to some fixed vector, which shows that $\mu$ can be identified with a scalar measure.
\end{proof}

\section{Two extensions of Theorem \ref{rwthm}}
\subsection{Remarks on a theorem concerning $s$-Riesz sets}

\begin{definition}
	Subset $A\subset \R^{n}$ is called a Riesz set if $spec(\mu) \subset A$ implies that $\mu \in M(\R^{n})$ is absolutely continuous.
\end{definition}

\begin{definition}(\cite{RW})
	Subset $A\subset \R^{n}$ is called an $s$-Riesz set if $\dim_{H}(\mu) \geq s$ for each $\mu \in M(\R^{n})$ whose spectrum lies inside $A$.
\end{definition}

Next theorems give examples of Riesz sets.
\begin{thm}\label{riesz1}
	{\bf (F. and M. Riesz)} 
	 	If a measure $\mu \in M(\R)$ has its spectrum inside some half-line, then it is absolutely continuous with respect to the Lebesgue measure. 
\end{thm}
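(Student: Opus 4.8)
The plan is to realize the one-sided spectrum condition as analyticity in a half-plane and then invoke the classical Hardy space boundary theory. First I would normalize the problem: if the half-line is $[a,\infty)$ rather than $[0,\infty)$, modulating $\mu$ by the character $x\mapsto e^{-2\pi i a x}$ shifts the spectrum to $[0,\infty)$ without affecting absolute continuity (multiplication by a unimodular continuous function preserves $\ll \mathcal{L}^1$ in both directions), and a half-line of the form $(-\infty,a]$ reduces to this case by the reflection $x\mapsto -x$. So I may assume $spec(\mu)\subset[0,\infty)$, i.e.\ $\hat\mu(\xi)=0$ for $\xi<0$.

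Next I would examine the Poisson integral $P_{y}*\mu$, whose Fourier transform is $\hat\mu(\xi)e^{-2\pi|\xi|y}$. Because $\hat\mu$ is supported in $[0,\infty)$, on that support one has $e^{-2\pi|\xi|y}=e^{-2\pi\xi y}$, so
\[
P_{y}*\mu(x)=\int_{0}^{\infty}\hat\mu(\xi)\,e^{2\pi i \xi(x+iy)}\,d\xi =: \Phi(x+iy).
\]
Since $\hat\mu$ is bounded and the kernel $e^{2\pi i \xi z}$ decays for $\mathrm{Im}\,z>0$, this integral converges locally uniformly and defines a holomorphic function $\Phi$ on the upper half-plane $\{\mathrm{Im}\,z>0\}$. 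Moreover the total-variation bound for Poisson integrals gives $\sup_{y>0}\lVert\Phi(\cdot+iy)\rVert_{L^{1}(\R)}\le\lVert\mu\rVert_{TV}<\infty$, so $\Phi$ lies in the Hardy space $H^{1}$ of the upper half-plane.

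Then I would transfer $\Phi$ to the disk via the Cayley transform and apply the classical structure of $H^{1}$ (Smirnov's theorem, equivalently the disk form of the F.\ and M.\ Riesz theorem): a function in $H^{1}$ has nontangential boundary values $\Phi^{\ast}\in L^{1}(\R)$ and $\Phi(\cdot+iy)\to\Phi^{\ast}$ in $L^{1}(\R)$ as $y\downarrow 0$. In particular the measures $\Phi(\cdot+iy)\,dx$ converge in total variation, hence weak-$\ast$, to the absolutely continuous measure $\Phi^{\ast}\,dx$. On the other hand, the Poisson integral of any finite measure converges weak-$\ast$ to that measure, so $\Phi(\cdot+iy)\,dx\to\mu$ weak-$\ast$. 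Comparing the two limits forces $\mu=\Phi^{\ast}\,dx$, which is absolutely continuous. The essential analytic content, and the step I expect to be the main obstacle, is precisely the $H^{1}$ boundary theory invoked in this last paragraph, namely that a holomorphic function with uniformly bounded $L^{1}$ means over horizontal lines cannot produce a singular boundary measure; this is the heart of the matter and is exactly where the one-sidedness of the spectrum is converted into regularity.
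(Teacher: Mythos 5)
The paper does not prove this statement at all: it is quoted as the classical F.~and M.~Riesz theorem, with the higher-dimensional generalizations attributed to Roginskaya's paper, and is used later only as a black box (an example of a ``Riesz set''). So there is no in-paper argument to compare yours against; I can only assess your proof on its own terms. It is correct and is essentially the standard textbook derivation: the normalization to $\mathrm{spec}(\mu)\subset[0,\infty)$ is fine, the identity $P_{y}*\mu(x)=\int_{0}^{\infty}\hat\mu(\xi)e^{2\pi i\xi(x+iy)}\,d\xi$ is justified because $\hat\mu(\xi)e^{-2\pi|\xi|y}$ is integrable so Fourier inversion applies, the bound $\sup_{y>0}\lVert P_{y}*\mu\rVert_{L^{1}}\le\lVert\mu\rVert_{TV}$ puts $\Phi$ in $H^{1}$ of the half-plane, and the comparison of the two weak-$\ast$ limits correctly forces $\mu=\Phi^{\ast}\,dx$. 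The one thing to be honest about (and you are) is that the entire analytic content is delegated to the $H^{1}$ boundary-value theorem, i.e.\ to the disk form of the very theorem being proved; your argument is therefore best described as a reduction of the line version to the circle version via the Cayley transform (where one must carry the standard weight $(1-w)^{-2}$ in the correspondence of $H^{1}$ spaces, a routine but nonzero amount of bookkeeping). Since the disk version is an independently citable classical fact, this is a legitimate proof, and it is in the same spirit as the paper's treatment, which simply cites the result.
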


\begin{thm}\label{riesz2}
Suppose that a measure $\mu \in M(\R^{2})$ has its spectrum inside some angle of measure strictly smaller than $\pi$. Then it is absolutely continuous with respect to the full Lebesgue measure.
\end{thm}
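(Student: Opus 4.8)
The plan is to reduce the statement to the model case of the first quadrant and then to apply the one–dimensional F.~and~M.~Riesz theorem (Theorem~\ref{riesz1}) fibrewise. First I would observe that ``angle of measure strictly smaller than $\pi$'' means that the spectrum is contained in a convex salient cone $A$ with vertex at $0$ whose two bounding rays are linearly independent. Hence there is a linear isomorphism $T\in GL(2,\R)$ with $A\subset T(Q)$, where $Q=\{\xi:\xi_{1}\ge 0,\ \xi_{2}\ge 0\}$ is the closed first quadrant. Pushing $\mu$ forward by $S=T^{*}$ and using $\widehat{S_{*}\mu}(\xi)=\hat\mu(S^{*}\xi)$, one gets a measure whose spectrum lies in $Q$; since $S$ is a linear isomorphism, absolute continuity with respect to $\mathcal{L}^{2}$ is preserved in both directions. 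So from now on I may assume $\mathrm{spec}(\mu)\subset Q$, which in particular gives the two transverse half–plane conditions $\mathrm{spec}(\mu)\subset\{\xi_{1}\ge 0\}$ and $\mathrm{spec}(\mu)\subset\{\xi_{2}\ge 0\}$.

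The key step is to promote the \emph{global} half–plane condition to a \emph{fibrewise} half–line condition. Disintegrate $\mu=\int_{\R}\mu_{x_{2}}\,d\rho_{2}(x_{2})$ along the projection to the $x_{2}$–axis, where $\rho_{2}=(p_{x_{2}})_{*}|\mu|$ and the $\mu_{x_{2}}$ are finite measures on the fibres $\R\times\{x_{2}\}$. Then for every $\xi_{1}$ the measure $\sigma_{\xi_{1}}(dx_{2}):=\widehat{\mu_{x_{2}}}(\xi_{1})\,\rho_{2}(dx_{2})$ is finite and satisfies $\hat\mu(\xi_{1},\xi_{2})=\widehat{\sigma_{\xi_{1}}}(\xi_{2})$. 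The condition $\hat\mu(\xi_{1},\cdot)\equiv 0$ for $\xi_{1}<0$ forces $\sigma_{\xi_{1}}=0$, i.e.\ $\widehat{\mu_{x_{2}}}(\xi_{1})=0$ for $\rho_{2}$-a.e.\ $x_{2}$. A Fubini argument on the jointly measurable map $(\xi_{1},x_{2})\mapsto\widehat{\mu_{x_{2}}}(\xi_{1})$, together with the continuity of each $\widehat{\mu_{x_{2}}}$, then yields that for $\rho_{2}$-a.e.\ $x_{2}$ one has $\mathrm{spec}(\mu_{x_{2}})\subset[0,\infty)$. By Theorem~\ref{riesz1}, $\mu_{x_{2}}\ll\mathcal{L}^{1}$ for $\rho_{2}$-a.e.\ $x_{2}$, which is exactly the statement $\mu\ll\mathcal{L}^{1}_{x_{1}}\otimes\rho_{2}$. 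Applying the same reasoning to the transverse condition $\mathrm{spec}(\mu)\subset\{\xi_{2}\ge 0\}$ gives $\mu\ll\rho_{1}\otimes\mathcal{L}^{1}_{x_{2}}$, with $\rho_{1}=(p_{x_{1}})_{*}|\mu|$.

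Finally I would combine the two directional estimates. Writing $\rho_{2}=\rho_{2}^{ac}+\rho_{2}^{s}$ for the Lebesgue decomposition with respect to $\mathcal{L}^{1}_{x_{2}}$, choose a Borel set $S$ with $\mathcal{L}^{1}(S)=0$ carrying $\rho_{2}^{s}$, and split $\mu=\mu\mres(\R\times S^{c})+\mu\mres(\R\times S)$. The first summand is absolutely continuous with respect to $\mathcal{L}^{1}_{x_{1}}\otimes\rho_{2}^{ac}$, hence with respect to $\mathcal{L}^{2}$. The second summand is concentrated on $\R\times S$, but it is also $\ll\rho_{1}\otimes\mathcal{L}^{1}_{x_{2}}$, and $(\rho_{1}\otimes\mathcal{L}^{1}_{x_{2}})(\R\times S)=0$; therefore it vanishes, and $\mu\ll\mathcal{L}^{2}$. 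The main obstacle, and the point where the condition ``strictly smaller than $\pi$'' is genuinely used, is precisely the passage from ``all partial marginals are absolutely continuous'' to absolute continuity of $\mu$: a superposition of singular fibre measures over the parameter can well be absolutely continuous (arclength on a diagonal segment is the standard cautionary example), so a naive fibrewise argument is not enough. This difficulty is resolved by the Fubini step above, which recovers a \emph{genuine} fibrewise half–line spectrum, and by the need for the \emph{transverse} half–plane condition to annihilate the part of $\mu$ that is singular in the $x_{2}$–direction; a single half–plane (angle $=\pi$) would not suffice, as the example $g(x_{1})\,dx_{1}\otimes\delta_{0}$ with $g\in H^{1}(\R)$ shows.
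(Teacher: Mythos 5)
Your argument is correct, but there is nothing in the paper to compare it against: Theorem~\ref{riesz2} is quoted there as a known result, with the remark that Theorem~0.3 of \cite{Ro} subsumes it and its higher-dimensional analogues, so no internal proof is given. What you have written is a complete, self-contained proof along the classical lines, and all three steps check out: the reduction to the first quadrant by a linear change of variables (you should add one sentence on modulation in case the vertex of the angle is not at the origin, but this is harmless since modulation preserves absolute continuity); the disintegration step, where the identity $\hat\mu(\xi_{1},\xi_{2})=\widehat{\sigma_{\xi_{1}}}(\xi_{2})$, uniqueness of Fourier transforms of finite measures, and the Fubini-plus-continuity argument genuinely upgrade the global vanishing of $\hat\mu$ on $\{\xi_{1}<0\}$ to $\mathrm{spec}(\mu_{x_{2}})\subset[0,\infty)$ for $\rho_{2}$-a.e.\ $x_{2}$, so that Theorem~\ref{riesz1} applies fibrewise; and the final bookkeeping, where the piece of $\mu$ sitting over the singular part of $\rho_{2}$ is annihilated by the transverse estimate $\mu\ll\rho_{1}\otimes\mathcal{L}^{1}_{x_{2}}$ because $\rho_{1}\otimes\mathcal{L}^{1}_{x_{2}}$ gives zero mass to $\R\times S$. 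The only slip is in your closing commentary: the diagonal-segment example shows that absolutely continuous \emph{marginals} do not force absolute continuity of the measure, not that a superposition of singular fibre measures can be absolutely continuous; since your actual proof extracts genuine fibrewise spectral information rather than marginal information, this misstatement plays no role. Your identification of where the hypothesis ``strictly smaller than $\pi$'' is used, with the counterexample $g(x_{1})\,dx_{1}\otimes\delta_{0}$ for $g$ in the Hardy space $H^{1}(\R)$ showing that a single half-plane does not suffice, is correct.
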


Both theorems have its higher dimensional analogues; Theorem 0.3. from \cite{Ro} generalizes all mentioned above cases. To produce examples of $s$-Riesz sets, in \cite{RW} authors used the following slicing property.

\begin{thm}\label{sriesz1}(\cite{RW}, Theorem 1)
Let $A\subset \R^{n}$. If there exists a $k$-dimensional subspace $V \subset \R^{n}$ such that $\forall a\in\R^{n}$ $(V+a)\cap A$ is a Riesz set on $V+a$, then $A$ is a $k$-Riesz set.
\end{thm}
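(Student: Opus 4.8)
The plan is to realize the de Leeuw restriction principle on the measure side: restricting $\hat{\mu}$ to an affine slice of frequency space parallel to $V$ is the Fourier transform of a suitable modulated projection of $\mu$ onto $V$, and the hypothesis forces each such projection to be absolutely continuous. Write $\R^{n} = V \oplus V^{\perp}$ and split both the physical and the frequency variable as $x = (x',x'')$ and $\xi = (\xi',\xi'')$ with $x',\xi' \in V$. Fix $\mu \in M(\R^{n})$ with $spec(\mu) \subset A$. For each $c \in V^{\perp}$ I would introduce the finite complex measure on $V$
$$
\nu_{c} := (p_{V})_{\#}\big(e^{-2\pi i \langle c, x'' \rangle}\mu\big), \qquad \text{i.e.} \qquad \int_{V} g\, d\nu_{c} = \int_{\R^{n}} g(x')\, e^{-2\pi i \langle c, x'' \rangle}\, d\mu(x',x''),
$$
which is well defined since $\lvert \nu_{c}\rvert \leq (p_{V})_{\#}\lvert\mu\rvert$. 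A one-line Fubini computation gives $\widehat{\nu_{c}}(\xi') = \widehat{\mu}(\xi',c)$, so $\widehat{\nu_{c}}$ is exactly the restriction of $\widehat{\mu}$ to the slice through $(0,c)$. Consequently $spec(\nu_{c})$, read inside the affine plane $V+(0,c)$, is contained in $(V+(0,c)) \cap A$ (up to the translation back to the origin, which only modulates $\nu_{c}$ and hence does not affect absolute continuity).

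First I would invoke the hypothesis: each $(V+a) \cap A$ is a Riesz set on $V+a$, so applying it with $a = (0,c)$ shows that $\nu_{c} \ll \mathcal{H}^{k}_{\mres V}$ for every $c \in V^{\perp}$. This is the single point at which the assumption is used.

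The dimension bound then follows by a gluing argument. Let $F \subset \R^{n}$ be Borel with $\dim_{H} F < k$ and set $P := p_{V}(F)$. Since $p_{V}$ is $1$-Lipschitz, $\dim_{H} P \leq \dim_{H} F < k$, whence $\mathcal{H}^{k}(P) = 0$. For any bounded Borel $g$ supported on $P$, absolute continuity of $\nu_{c}$ forces $\int_{V} g\, d\nu_{c} = 0$ for every $c$; equivalently the finite complex measure $\sigma_{g} := (p_{V^{\perp}})_{\#}(g\mu)$ on $V^{\perp}$ has $\widehat{\sigma_{g}}(c) = \int g(x') e^{-2\pi i \langle c,x''\rangle} d\mu = 0$ for all $c$, so $\sigma_{g} = 0$ by injectivity of the Fourier transform on $M(V^{\perp})$. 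Taking $g = \mathbf{1}_{P \cap Q}$ for boxes $Q \subset V$ yields $\mu\big((P\cap Q)\times B\big) = 0$ for every box $B \subset V^{\perp}$. The products $(P\cap Q)\times B$ form a $\pi$-system generating the Borel $\sigma$-algebra of the cylinder $P \times V^{\perp}$, so by uniqueness of finite measures on a generating $\pi$-system the restriction of $\mu$ to $P\times V^{\perp}$ vanishes; in particular $\lvert\mu\rvert(F) \leq \lvert\mu\rvert(P\times V^{\perp}) = 0$. Thus no Borel set of Hausdorff dimension smaller than $k$ can carry mass, which is precisely $\dim_{H}(\mu) \geq k$, i.e. $A$ is a $k$-Riesz set.

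The main obstacle I expect is the bookkeeping around the spectra of the slices $\nu_{c}$: one must verify that $spec(\nu_{c}) \subset (V+(0,c))\cap A$ with the correct closure conventions (the spectrum is the support of $\widehat{\nu_{c}}$, and one needs the relevant slice of $A$ to be closed for the Riesz hypothesis to apply verbatim), and that the notion of a Riesz set on $V+a$ is genuinely translation invariant, so that it can be transported from the affine slice to a measure on $V$ through the origin via modulation. The remaining analytic input — that the restriction of $\widehat{\mu}$ to an affine subspace is the transform of the modulated projection — is the de Leeuw mechanism and is elementary, and the concluding gluing is the routine $\pi$-system argument above.
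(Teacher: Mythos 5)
Your argument is correct and is exactly the de~Leeuw-type slicing that the paper attributes to \cite{RW}: the restriction of $\hat{\mu}$ to the affine slice $V+(0,c)$ is the Fourier transform of the modulated projection $\nu_{c}$, the Riesz hypothesis on each slice forces $\nu_{c}\ll\lambda_{V}$, and the Fourier-injectivity plus $\pi$-system gluing gives $|\mu|(F)=0$ whenever $\lambda_{V}(p_{V}(F))=0$ --- which in fact establishes the stronger Theorem~\ref{sriesz} as well. The only cosmetic point is that $p_{V}(F)$ need only be analytic rather than Borel, so one should replace it by a Borel $\lambda_{V}$-null superset before running the $\pi$-system step.
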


Since the argument in this theorem is based on the fact that orthogonal projections do not increase Hausdorff dimension, its easy modification gives a little bit more:

\begin{thm}\label{sriesz}
Let $A\subset \R^{n}$ and suppose that $\mu \in M(\R^{n})$ has its spectrum inside $A$. If there exists a $k$-dimensional subspace $V \subset \R^{n}$ such that $\forall a\in\R^{n}$ $(V+a)\cap A$ is a Riesz set on $V+a$, then $\mu(F)=0$ for each $F$ such that $\lambda_{V}(p_{V}(F))=0$ (where $\lambda_{V}$ is the Lebesgue measure on $V$).
\end{thm}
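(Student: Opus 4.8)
The plan is to show that the hypothesis forces the pushforward $(p_V)_\ast|\mu|$ to be absolutely continuous with respect to $\lambda_V$; this immediately yields the conclusion. Indeed, for a Borel set $F$ with $\lambda_V(p_V(F))=0$ we have $|\mu|(F)\le |\mu|(p_V^{-1}(p_V(F)))=(p_V)_\ast|\mu|(p_V(F))$, and the right-hand side vanishes once $(p_V)_\ast|\mu|\ll\lambda_V$, whence $\mu(F)=0$. So everything reduces to proving $\rho:=(p_V)_\ast|\mu|\ll\lambda_V$.

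First I would run the de Leeuw-type argument behind Theorem~\ref{sriesz1}, but keeping track of all translates of $V$ rather than just $V$ itself. Identify $\R^{n}=V\oplus V^{\perp}$ and, for $b\in V^{\perp}$, set $\mu_{b}:=e^{-2\pi i\langle b,\cdot\rangle}\mu$, so that $\widehat{\mu_{b}}(\xi)=\hat\mu(\xi+b)$. Using $\langle\eta,p_{V}(x)\rangle=\langle\eta,x\rangle$ for $\eta\in V$, a direct computation gives $\widehat{(p_{V})_{\ast}\mu_{b}}(\eta)=\hat\mu(\eta+b)$ for $\eta\in V$, so the spectrum of the finite measure $(p_{V})_{\ast}\mu_{b}$ on $V$ is contained in $(A-b)\cap V=((V+b)\cap A)-b$. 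By hypothesis $(V+b)\cap A$ is a Riesz set on $V+b$, and the Riesz property is translation invariant (by modulation), hence $(A-b)\cap V$ is a Riesz set on $V$. Consequently $(p_{V})_{\ast}\mu_{b}\ll\lambda_{V}$ for every $b\in V^{\perp}$.

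Next I would disintegrate $\mu$ along $p_{V}$, writing $d\mu(y,z)=d\sigma_{y}(z)\,d\rho(y)$ with $\rho=(p_{V})_{\ast}|\mu|$ and a measurable family of complex measures $\sigma_{y}$ on the fibre $\{y\}\times V^{\perp}$ normalized by $|\sigma_{y}|(V^{\perp})=1$. Unwinding the definition of the pushforward and using $\langle b,x\rangle=\langle b,z\rangle$ for $b\in V^{\perp}$, one obtains that, as measures on $V$,
\[
(p_{V})_{\ast}\mu_{b}=\widehat{\sigma_{\bullet}}(b)\,\rho ,
\]
i.e. the measure on $V$ with $\rho$-density $y\mapsto\widehat{\sigma_{y}}(b)$. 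Writing the Lebesgue decomposition $\rho=\rho_{ac}+\rho_{s}$ with $\rho_{s}\perp\lambda_{V}$, the bound $|\widehat{\sigma_{y}}(b)|\le 1$ makes $\widehat{\sigma_{\bullet}}(b)\rho_{ac}$ absolutely continuous and $\widehat{\sigma_{\bullet}}(b)\rho_{s}$ singular; since the previous step shows the sum is absolutely continuous, its singular part must vanish, so $\widehat{\sigma_{y}}(b)=0$ for $\rho_{s}$-a.e.\ $y$, for each fixed $b$. Choosing a countable dense set of $b\in V^{\perp}$ and using continuity of $b\mapsto\widehat{\sigma_{y}}(b)$, I get $\widehat{\sigma_{y}}\equiv 0$ for $\rho_{s}$-a.e.\ $y$, hence $\sigma_{y}=0$, contradicting $|\sigma_{y}|(V^{\perp})=1$ unless $\rho_{s}=0$. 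Thus $\rho\ll\lambda_{V}$, as required.

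The main obstacle is the final step: passing from ``$\widehat{\sigma_{y}}(b)=0$ for a.e.\ $y$, for each $b$'' to ``$\sigma_{y}=0$ for a.e.\ $y$'', which requires the joint measurability of $(y,b)\mapsto\widehat{\sigma_{y}}(b)$ together with continuity in $b$ in order to exchange the quantifiers via a countable dense set; and, upstream of this, the clean existence of the disintegration $d\mu(y,z)=d\sigma_{y}(z)\,d\rho(y)$ together with the identity $(p_{V})_{\ast}\mu_{b}=\widehat{\sigma_{\bullet}}(b)\rho$, which is precisely where the modulation parameter $b$ interacts with the fibre measures. The restriction identity $\widehat{(p_{V})_{\ast}\mu_{b}}=\hat\mu(\cdot+b)|_{V}$ and the translation invariance of the Riesz property are routine.
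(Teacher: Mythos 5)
Your proof is correct. Note that the paper does not actually write out a proof of this theorem: it is presented as ``an easy modification'' of the method behind Theorem~\ref{sriesz1} from \cite{RW}, and the first half of your argument --- modulating by the characters $e^{-2\pi i\langle b,\cdot\rangle}$ with $b\in V^{\perp}$, pushing forward by $p_{V}$, computing $\widehat{(p_{V})_{\ast}\mu_{b}}(\eta)=\hat{\mu}(\eta+b)$ on $V$, and invoking the Riesz property of $(V+b)\cap A$ to conclude $(p_{V})_{\ast}\mu_{b}\ll\lambda_{V}$ --- is exactly that method. Where you diverge is in the finish: you disintegrate $\mu$ over $p_{V}$ and run a Lebesgue-decomposition argument on $\rho=(p_{V})_{\ast}|\mu|$, which forces the measurable-family bookkeeping you flag at the end (a single exceptional null set for a countable dense set of $b$, continuity of $b\mapsto\widehat{\sigma_{y}}(b)$, injectivity of the Fourier transform on the fibres). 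That is all sound, but there is a more direct finish that avoids disintegration entirely: for a Borel set $E\subset V$ with $\lambda_{V}(E)=0$, absolute continuity of each $(p_{V})_{\ast}\mu_{b}$ gives $\int_{E}e^{-2\pi i\langle\eta,y\rangle}\,d\bigl((p_{V})_{\ast}\mu_{b}\bigr)(y)=0$ for every $\eta\in V$, and unwinding the pushforward this reads $\widehat{\mu_{\mres p_{V}^{-1}(E)}}(\eta+b)=0$; since $\eta+b$ ranges over all of $\R^{n}$, the finite measure $\mu_{\mres p_{V}^{-1}(E)}$ vanishes, which is the assertion of the theorem once $E$ is taken to be a Borel null hull of $p_{V}(F)$. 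Your route proves the formally stronger statement $(p_{V})_{\ast}|\mu|\ll\lambda_{V}$, but the two are equivalent here. The only technical nit in your reduction step is the measurability of $p_{V}(F)$ (it is merely analytic); this is repaired exactly by passing to such a Borel hull.
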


\begin{example}
	Bounded sets are Riesz sets. Indeed, let $A$ be a bounded set and let $f \in \mathcal{S}(\R^{n})$ be such that $\hat{f} \equiv 1$ on some ball containing $A$. Then we have an identity $\mu = \mu * f \in L^{1}(\R^{n})$ for any $\mu$ with spectrum inside $A$.
\end{example}

\begin{example}
Our model set is the following: let $V\subset \R^{n}$ be a $k$-dimensional subspace and let $f: \R \to \R$ be any strictly increasing function such that $\lim_{x\to +\infty} f(x)= +\infty$. In coordinates $\xi = (\xi_{1}, \xi_{2}) \in V \times V^{\perp}$ denote
\[
 B_{f} = \{(\xi_{1},\xi_{2}): |\xi_{1}| \geq 1, |\xi_{2}|\leq f(|\xi_{1}|)\},
\]
then $\R^{n} \setminus B_{f}$ is a $k$-Riesz set. This is a consequence of Theorem~\ref{sriesz}, previous example and the fact that slices of $\R^{n} \setminus B_{f}$ with planes parallel to $V$ are bounded.
\end{example}

Next we present a stronger version of Theorem~\ref{sriesz} for tempered measures. Namely, we allow $\hat{\mu}$ to be an $L^{2}$ function outside $s$-Riesz sets. In exchange, we require certain stability with respect to taking $\epsilon$-neighbourhoods.

\begin{cor}
\label{l2pert}
Let $A\subset \R^{n}$ and $\mu$ be a tempered Radon measure. Suppose that:
\begin{enumerate}
\item restriction (in the sense of distributions) of $\hat{\mu}$ to $\R^{n}\setminus A$ is an $L^{2}$ function,
\item there exists a $k$-dimensional subspace $V \subset \R^{n}$ such that for sufficiently small $\epsilon>0$, $\forall a\in\R^{n}$ $(V+a)\cap (A+B(0,\epsilon))$ is a Riesz set on $V+a$,
\end{enumerate}
then $\mu(F)=0$ for each $F$ such that $\lambda_{V}(p_{V}(F))=0$.
\end{cor}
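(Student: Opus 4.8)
The plan is to split the measure according to whether its frequencies sit near $A$ or away from it, using a smooth cutoff in frequency, and then to treat the two resulting pieces by entirely different mechanisms: the part supported off $A$ will be $L^2$, hence absolutely continuous, while the part supported near $A$ will have its spectrum inside a set that is Riesz on every slice parallel to $V$, so it is handled by Theorem~\ref{sriesz}. Concretely, I would first fix $\epsilon>0$ small enough that hypothesis (2) holds, and choose $\eta\in C^{\infty}(\R^{n})$ with $0\le\eta\le1$, $\eta\equiv1$ on a neighbourhood of $A$, and $\mathrm{supp}\,\eta\subset A+B(0,\epsilon)$ (for instance mollify the indicator of $A+B(0,\epsilon/2)$ at scale $\epsilon/4$). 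Smoothness is essential: $\hat{\mu}$ is only a tempered distribution, so the products $\eta\hat{\mu}$ and $(1-\eta)\hat{\mu}$ make sense only because $\eta$ and $1-\eta$ are smooth multipliers of $\mathcal{S}'(\R^{n})$, and the room of size $\epsilon$ is exactly what lets $\mathrm{supp}\,\eta$ stay off $A$ while $1-\eta$ still vanishes on a whole neighbourhood of $A$. This is the structural reason the statement must pass from $A$ to its $\epsilon$-neighbourhood. I then set $\mu_{2}:=\mathcal{F}^{-1}\big[(1-\eta)\hat{\mu}\big]$ and $\mu_{1}:=\mu-\mu_{2}=\mathcal{F}^{-1}\big[\eta\hat{\mu}\big]$.

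For the off-diagonal piece, since $1-\eta$ vanishes on a neighbourhood of $A$ and, by hypothesis (1), $\hat{\mu}$ coincides with an $L^{2}$ function $g$ on $\R^{n}\setminus A$, we have $(1-\eta)\hat{\mu}=(1-\eta)g\in L^{2}(\R^{n})$; by Plancherel $\mu_{2}\in L^{2}(\R^{n})$, so $\mu_{2}$ is absolutely continuous with respect to Lebesgue measure. Any Borel set $F$ with $\lambda_{V}(p_{V}(F))=0$ is Lebesgue-null, since $F\subset p_{V}(F)\times V^{\perp}$ and Tonelli gives $\lambda_{n}(F)=\lambda_{V}(p_{V}(F))\cdot\lambda_{V^{\perp}}(V^{\perp})=0$ (under the convention $0\cdot\infty=0$); hence $\mu_{2}(F)=0$.

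For the diagonal piece, $\mathrm{spec}(\mu_{1})=\mathrm{supp}(\eta\hat{\mu})\subset\mathrm{supp}\,\eta\subset A+B(0,\epsilon)$, so for every $a\in\R^{n}$ the slice satisfies $(V+a)\cap\mathrm{spec}(\mu_{1})\subset(V+a)\cap(A+B(0,\epsilon))$, which by hypothesis (2) is a Riesz set on $V+a$. Applying Theorem~\ref{sriesz} to $\mu_{1}$ with $A+B(0,\epsilon)$ in place of $A$ then yields $\mu_{1}(F)=0$ for every $F$ with $\lambda_{V}(p_{V}(F))=0$. Adding the two contributions gives $\mu(F)=\mu_{1}(F)+\mu_{2}(F)=0$, as required.

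The main obstacle is that $\mu_{1}$ need not be a \emph{finite} measure, since $\mu$ is only assumed to be a tempered Radon measure, whereas Theorem~\ref{sriesz} is phrased for $\mu\in M(\R^{n})$. I expect this to be the genuine content of the corollary: one must check that the de Leeuw--type slicing behind Theorem~\ref{sriesz}, being a coset-wise application of the F.\ and M.\ Riesz theorem, survives for a locally finite tempered Radon measure carrying the stated spectral localization. This is exactly the ``extension to tempered Radon measures'' announced in the paper; granting it, the remaining steps above are routine.
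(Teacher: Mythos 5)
Your decomposition of $\hat{\mu}$ into a piece carried near $A$ and an $L^{2}$ piece off $A$ is sound, and your treatment of $\mu_{2}$ is essentially what the paper does (it simply subtracts $\check{f}$, where $f$ is the $L^{2}$ function, and uses that $F$ is Lebesgue-null; the smooth cutoff is not needed). But the step you defer in your last paragraph is not a routine technicality --- it is the entire content of the corollary. Theorem~\ref{sriesz} is stated for $\mu\in M(\R^{n})$, i.e.\ \emph{finite} measures, because the coset-wise slicing rests on the F.\ and M.\ Riesz theorem, for which finiteness of the total variation is essential; your $\mu_{1}$ is only a locally finite tempered Radon measure, and no extension of Theorem~\ref{sriesz} to that class is available to quote. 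So, as written, the proof has a genuine gap exactly where you flag it.

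The paper closes this gap not by re-proving the slicing theorem for infinite measures but by localizing $\mu$ in \emph{physical} space. By inner regularity one may assume the offending set $F$ is bounded. Take $g\in\mathcal{D}(\R^{n})$ with $\int g=1$ and set $f=\check{g}$; replacing $f$ by $f(\cdot/r)$ with $r$ large gives $|f-1|<\delta$ on $F$ while $\operatorname{supp}\hat{f}\subset B(0,\epsilon)$, since the support of the Fourier transform \emph{shrinks} as $r$ grows. Then $\nu=f\,d\mu$ is a genuinely finite measure (Schwartz decay beats the polynomial growth of a tempered Radon measure), $\nu(F)\neq 0$ for small $\delta$, and $\operatorname{spec}(\nu)\subset\operatorname{spec}(\mu)+\operatorname{supp}\hat{f}\subset A+B(0,\epsilon)$, so Theorem~\ref{sriesz} applies directly to $\nu$ and yields the contradiction. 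Note that this is the real role of the $\epsilon$-stability in hypothesis (2): it absorbs the spectral spreading caused by multiplication by $f$, not merely the room needed for a smooth frequency cutoff. To salvage your frequency-splitting route you would still have to run this multiplication argument on $\mu_{1}$, at which point the splitting buys nothing over the paper's direct subtraction of $\check{f}$.
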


\begin{proof}
	Suppose that there exists a bounded set $F$ contradicting the thesis. Assume first that $\hat{\mu} = 0$ outside $A$.
	For any $\delta > 0$ we can find a function $f \in  \mathcal{S}(\mathbb{R}^{n})$ such that $\hat{f} \in \mathcal{D}(\mathbb{R}^{n})$ and $|f(x)-1|< \delta$ for $x \in F$.
	
	\textit{Construction:} Take $g \in \mathcal{D}(\mathbb{R}^{n})$ such that $\int g = 1$ and denote $f = \check{g}$.
	Then $f(0) = 1$ and there exists $U$, a neighbourhood of $0$, such that $|f(x)-1|< \delta$ for $x \in U$. Of course 
	$\forall_{r>0} f(\frac{x}{r})\hat  \ \in \mathcal{D}(\mathbb{R}^{n})$. Taking big $r$ such that $F \subset rU$ we get a suitable function. Also, for sufficiently large $r$, $spec(f)$ is contained in arbitrarily small ball.
	
	Denote $\nu = f d\mu$.
	For sufficiently small $\delta$, $\nu(F) \neq 0$, $\nu$ is a finite measure and $spec(\nu) \subset spec(\mu) + spec(f)$ ($\nu$ is a product of a tempered distribution $\mu$ and a Schwartz function $f$). Hence, the spectrum of $\nu$ is as in the Theorem~\ref{sriesz}, which gives a contradiction.
	
	Now, if $\hat{\mu} = h \neq 0$ outside $A$ for some $h \in L^2$, then it suffices to apply previous reasoning for $\mu - \check{h}$ (changing $\mu$ by absolutely continuous measures has no impact on singular sets).
\end{proof}
\begin{remark}
Sets $B_{f}$ clearly satisfy assumption $(2)$ of Corollary~\ref{l2pert}.
\end{remark}
Let us go further and ask what can be said if restriction of $\hat{\mu}$ to $\R^{n} \setminus A$ is close to an $L^{2}$ function in some sense? For example if it is a Fourier transform of a distribution  from the fractional Sobolev space $W^{-s,2}$? If the negaitve order of smoothness $-s$ may be taken arbitrarily close to zero, then the lower bound of the Hausdorff dimension remains the same (though this trade-off formally costs us expected result about projections). This answer is obtained by the following lemma which employs a technique used in \cite{KO} and involves using properties of Salem sets.

\begin{lem}\label{tradeoff} Let $\mu \in M(\R^{n})$ and $F$ be a Borel set such that $\dim_{H}(F) = \alpha$ and $\mu(F) \neq 0$. Then, for any $0 < \eta \leq 1$ there exists a probability measure on $\R^{n}$ satisfying the following properties:
	\begin{enumerate}[a)]
		\item $|\hat{\nu}(\xi)| \lesssim |\xi|^{-\frac{\eta}{4}}$,
		\item $\nu$ is supported on a compact set $G$ s.t. $\dim_{M}(G) \leq 2\eta$,
		\item there exists $\tilde{F}$ such that $\mu*\nu(\tilde{F}) \neq 0$ and $\dim_{H}(\tilde{F}) \leq \alpha + 2\eta$.
	\end{enumerate}
\end{lem}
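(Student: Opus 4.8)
The plan is to convolve $\mu$ with a Salem-type probability measure $\nu$ concentrated on a tiny set of small box dimension, so that $\widehat{\mu\ast\nu}=\hat\mu\,\hat\nu$ acquires the extra decay $|\xi|^{-\epsilon/4}$ while the mass of $\mu$ is only slightly displaced. First I would replace $F$ by a compact subset: by inner regularity of the finite measure $\mu$ there is a compact $K\subseteq F$ with $\mu(K)\neq 0$, and $\dim_{H}K\le\dim_{H}F=\alpha$ by monotonicity. Passing to a compact set is essential for the nonvanishing step, as it is what lets us control the mass $\mu$ puts on thin neighbourhoods of $K$. The witness set will be the (compact) sumset $\tilde F:=K+G$, where $G=\operatorname{supp}\nu$.

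The heart of the proof, and the step I expect to be the main obstacle, is the construction of $\nu$. Following the technique of \cite{KO}, I would produce a probability measure $\nu_{0}$ on a compact $G_{0}\subseteq[0,1]^{n}$ by a (random) Cantor-type iteration whose branching ratios are tuned to give $\dim_{M}G_{0}\le 2\epsilon$, while independence across successive scales forces the power decay $|\hat{\nu_{0}}(\xi)|\lesssim|\xi|^{-\epsilon/4}$. The generous gap between the decay exponent $\epsilon/4$ (Fourier dimension $\epsilon/2$) and the box-dimension bound $2\epsilon$ is exactly what makes a non-optimal construction suffice: one needs only Fourier dimension at least $\epsilon/2$ together with box dimension at most $2\epsilon$, not a sharp Salem set. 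I would then rescale by $S_{r}(x)=rx$ and set $\nu:=(S_{r})_{\ast}\nu_{0}$; this preserves $\dim_{M}G=\dim_{M}G_{0}\le 2\epsilon$ for $G:=rG_{0}$, preserves the decay rate up to an $r$-dependent constant (harmless, since the implied constant in (a) may depend on $\mu,F,\epsilon$), and shrinks $G$ into $B(0,r\sqrt{n})$. This gives (a) and (b).

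For the dimension claim in (c), the set $\tilde F=K+G$ is the image of $K\times G$ under the Lipschitz addition map, so by the standard product inequality $\dim_{H}\tilde F\le\dim_{H}(K\times G)\le\dim_{H}K+\dim_{M}G\le\alpha+2\epsilon$.

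It remains to show $\mu\ast\nu(\tilde F)\neq 0$. Writing $\mu\ast\nu(\tilde F)=\int_{G}\mu(\tilde F-y)\,d\nu(y)$, for each $y\in G\subseteq B(0,r\sqrt{n})$ one has $K\subseteq\tilde F-y\subseteq K+B(0,2r\sqrt{n})$, since $0\in G-y$ and $G-y\subseteq B(0,2r\sqrt{n})$. As $K$ is compact, $|\mu|\big((K+B(0,\delta))\setminus K\big)\to 0$ as $\delta\downarrow 0$, so I would fix $r$ small enough that $|\mu|\big((K+B(0,2r\sqrt{n}))\setminus K\big)<|\mu(K)|$. Then $|\mu(\tilde F-y)-\mu(K)|<|\mu(K)|$ for every $y\in G$, so $\mu(\tilde F-y)$ lies in the open disk of radius $|\mu(K)|$ about $\mu(K)$ and hence $\operatorname{Re}\big(\overline{\mu(K)}\,\mu(\tilde F-y)\big)>0$ throughout $G$. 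Integrating against the probability measure $\nu$ yields $\operatorname{Re}\big(\overline{\mu(K)}\,\mu\ast\nu(\tilde F)\big)>0$, so $\mu\ast\nu(\tilde F)\neq 0$. Apart from the construction of $\nu_{0}$, which is the only delicate input, the argument is soft measure theory together with the Lipschitz--product dimension estimate.
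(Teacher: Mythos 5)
Your overall architecture matches the paper's: reduce to a compact $K$ with $\mu(K)\neq 0$, convolve with a Salem-type probability measure $\nu$ supported on a tiny set $G$, take $\tilde F=K+G$, and control $\dim_H(\tilde F)$ via $\dim_H(K)+\dim_M(G)$ (the paper cites Lemma~2 of \cite{KO} for this; your product-plus-Lipschitz derivation is the standard proof of the same fact). The one place you diverge is the only genuinely delicate ingredient, the construction of $\nu_0$, and there your proposal is an unexecuted sketch rather than a proof: you assert that a random Cantor-type iteration ``with tuned branching ratios'' yields simultaneously $\dim_M(G_0)\le 2\epsilon$ and $|\hat\nu_0(\xi)|\lesssim|\xi|^{-\epsilon/4}$. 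That existence statement is exactly the crux, and it is not automatic even with the factor-of-four slack you point out: for instance, a product of $n$ one-dimensional Salem measures fails for large $n$ (the box dimensions add while the decay exponent does not improve), so one really needs a genuinely $n$-dimensional random construction, which would have to be carried out or cited precisely. The paper avoids this entirely by taking $\nu$ to be the image of the uniform measure on an $\epsilon$-dimensional Cantor subset of $\R$ under $n$-dimensional Brownian motion: Kahane's theorem (Theorem~1 of Chapter~17 in \cite{Kah}, or Theorem~12.1 in \cite{M2}) gives the Fourier decay for free, and the a.s.\ $\beta$-H\"older continuity of Brownian paths for $\beta<\tfrac12$ bounds the box dimension of the image by $\epsilon/\beta$, which yields b). If you replace your sketched construction by this citation, your argument goes through. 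On the plus side, your nonvanishing step is actually more careful than the paper's: by controlling $|\mu|\bigl((K+B(0,2r\sqrt n))\setminus K\bigr)<|\mu(K)|$ and observing that $\mu(\tilde F-y)$ stays in a disk around $\mu(K)$ avoiding $0$, you handle signed and complex measures correctly, whereas the paper's ``suppose $\mu(F)>0$'' implicitly treats the positive case.
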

\begin{proof}To get first two properties it suffices to consider an image of a uniform measure on $\eta$-dimensional Cantor subset on $\R$ by the $n$-dimensional Brownian motion. Theorem 12.1. from \cite{M2} or Theorem 1 from Chapter 17 in \cite{Kah} gives a), while b) is implied by a well known fact that trajectories of the Brownian motion are almost surely $\beta$-H\"older continuous with $0<\beta<\frac{1}{2}$.
	
	Now let us prove  c). For simplicity, suppose that $\mu$ is real-valued and $\mu(F)>0$. By regularity and Jordan decomposition theorem for measures, we may assume that $F$ is compact and, for some $\delta>0$, its $\delta$-neighbourhood $F_{\delta}$ satisfies $\mu_{-}(F_{\delta})< \frac{1}{100} \mu(F)$. It suffices to rescale previously obtained $\nu$ so that $G \subset B(0, \frac{\delta}{2})$ and take $\tilde{F} = F + G$. Indeed
	\[
	\mu*\nu(F+G) = \int_{G} \mu(F+G-x) d\nu(x)
	\]
	and $F \subset F+G-x \subset F_{\delta}$ for any $x \in G$, so the integral is positive. Moreover, $\dim_{H}(F+G) \leq \dim_{H}(F)+\dim_{M}(G) \leq \alpha + 2\eta$, (\cite{KO}, Lemma 2) which proves the lemma.
\end{proof}

The above immediately leads to the announced corollary:

\begin{cor}
	\label{sobolevpert}
	Let $A\subset \R^{n}$ and $\mu$ be a tempered Radon measure. Suppose that:
	\begin{enumerate}
		\item for an arbitrary $s>0$ restriction (in the sense of distribution) of $\hat{\mu}$ to $\R^{n}\setminus A$ is a Fourier transform of an element of $W^{-s,2}$,
		\item there exists a $k$-dimensional subspace $V \subset \R^{n}$ such that for sufficiently small $\epsilon>0$ $\forall a\in\R^{n}$ $(V+a)\cap (A+B(0,\epsilon))$ is a Riesz set on $V+a$,
	\end{enumerate}
	then $dim_{H}(\mu) \geq k$.
\end{cor}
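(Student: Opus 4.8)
The plan is to argue by contradiction, feeding the output of Lemma~\ref{tradeoff} into Corollary~\ref{l2pert}. Suppose that $\dim_{H}(\mu) < k$, so there is a Borel set $F$ with $\mu(F) \neq 0$ and $\dim_{H}(F) = \alpha < k$; by inner regularity we may take $F$ bounded. Fix $\epsilon \in (0,1]$ small enough that $\alpha + 2\epsilon < k$ and that $\epsilon$ is admissible for the Riesz-set condition in hypothesis (2). Applying Lemma~\ref{tradeoff} with this $\epsilon$ produces a compactly supported probability measure $\nu$ with $|\hat{\nu}(\xi)| \lesssim |\xi|^{-\epsilon/4}$ and a set $\tilde{F}$ with $\mu*\nu(\tilde{F}) \neq 0$ and $\dim_{H}(\tilde{F}) \leq \alpha + 2\epsilon < k$. (If $\mu$ is not finite we first replace it by its restriction to a large ball containing a $\delta$-neighbourhood of $F$; since $\nu$ is supported near the origin this substitution does not change the value $\mu*\nu(\tilde{F})$.)

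The heart of the argument is to check that $\mu*\nu$ satisfies the two hypotheses of Corollary~\ref{l2pert}. Condition (2) there is literally hypothesis (2) here, so it remains to verify condition (1): that $\widehat{\mu*\nu} = \hat{\mu}\,\hat{\nu}$, restricted to $\R^{n}\setminus A$, is an $L^{2}$ function. Here I would invoke hypothesis (1) with the specific exponent $s = \epsilon/4$: on the open set $\R^{n}\setminus A$ the distribution $\hat{\mu}$ agrees with $\hat{g}$ for some $g \in H^{-\epsilon/4}$, hence with an $L^{2}_{loc}$ function satisfying $\int |\hat{g}(\xi)|^{2}(1+|\xi|^{2})^{-\epsilon/4}\,d\xi < \infty$. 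Since $\hat{\nu}$ is smooth (as $\nu$ has compact support) and bounded by $1$, the product is well defined, and splitting the integral at $|\xi| = 1$ gives
\[
\int_{\R^{n}\setminus A} |\hat{g}\,\hat{\nu}|^{2}\,d\xi \;\le\; \int_{|\xi|\le 1} |\hat{g}|^{2}\,d\xi \;+\; C\int_{|\xi|>1} |\hat{g}|^{2}\,|\xi|^{-\epsilon/2}\,d\xi \;<\; \infty,
\]
where the first term is finite because $\hat{g} \in L^{2}_{loc}$ (using $|\hat{\nu}|\le 1$) and the second because $|\xi|^{-\epsilon/2} \asymp (1+|\xi|^{2})^{-\epsilon/4}$ for $|\xi|>1$. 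Thus the decay rate $\epsilon/4$ of $\hat{\nu}$ exactly compensates the Sobolev defect $-\epsilon/4$ of $\hat{\mu}$, upgrading the restriction to $L^{2}$.

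With both hypotheses verified, Corollary~\ref{l2pert} applies to the tempered Radon measure $\mu*\nu$ and yields $\mu*\nu(F')=0$ for every $F'$ with $\lambda_{V}(p_{V}(F'))=0$. I would apply this to $F' = \tilde{F}$: the orthogonal projection $p_{V}$ is $1$-Lipschitz, so $\dim_{H}(p_{V}(\tilde{F})) \le \dim_{H}(\tilde{F}) < k = \dim V$, and a subset of the $k$-dimensional space $V$ of Hausdorff dimension strictly below $k$ is $\lambda_{V}$-null. Hence $\mu*\nu(\tilde{F}) = 0$, contradicting $\mu*\nu(\tilde{F})\neq 0$ from Lemma~\ref{tradeoff}. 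This contradiction forces $\dim_{H}(\mu)\ge k$.

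The step I expect to be the main obstacle is the $L^{2}$-upgrade in the second paragraph: it is what forces the precise bookkeeping between the Sobolev order $-s$ available in hypothesis (1) and the decay exponent $\epsilon/4$ delivered by Lemma~\ref{tradeoff}, and it is the only place where the freedom to take $s$ arbitrarily small is genuinely used. Everything else — the projection/dimension estimate and the reduction to a finite measure — is routine.
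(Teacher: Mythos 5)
Your proposal is correct and follows essentially the same route as the paper: contradiction via Lemma~\ref{tradeoff}, convolution with the Salem-type measure $\nu$, the $L^{2}$ upgrade of $\widehat{\mu*\nu}$ on $\R^{n}\setminus A$ (which the paper merely asserts and you verify by matching $s=\epsilon/4$ against the decay of $\hat{\nu}$), and then Corollary~\ref{l2pert} applied to $\tilde{F}$. No gaps; your version is just a more detailed writeup of the paper's argument.
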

\begin{proof}
Suppose that $\mu(F) \neq 0$, and $\eta>0$ is such that $dim_{H}(F)+2\eta < k$. For this $\eta$, take $\nu$ from Lemma \ref{tradeoff} and convolve it with $\mu$. Then, the restriction of $\widehat{\mu*\nu}$ to $\R^{n} \setminus A$ is in $L^{2}$, but (c) from Lemma \ref{tradeoff} and Corollary \ref{l2pert} give a contradiction.
\end{proof}
\subsection{Proof of Theorem \ref{2dim}}
Before the proof let us recall that in fact we assume Lipschitz continuity of the restriction of $\phi$ to the unit sphere. We use the standard metric on $\G(m,E)$, that is
 \[d_{\G(m,E)}(V, W) = \sup_{z\in V\cap S^{d-1}} dist_{E}(z,W).
 \]
\begin{proof}
Let us assume that  for some $A \subset \mathbb{R}^{n}$ such that  $\dim_{H}(A)<2$, we have $\mu(A)=e \neq 0$, and take $j$ satisfying $e \notin W_{j}$. There exists a functional $\theta \in E^{*}$ satisfying $W_{j} \subset ker \ \theta$ and $\theta(e) \neq 0$. Its value on $v$ may be computed as follows: project $v$ on $span \{e\}$ along a subspace containing $W_{j}$ (but not $e$) and take scalar product with $e$. Let $\nu \in M(\mathbb{R}^{n})$ 
be defined by the formula $\nu = \theta(\mu)$. Then we have $\hat{\nu} = \theta(\hat{\mu}(\xi))$, \boldmath $\nu(A) \neq 0$ \unboldmath and for some constant $C=C(ker \ \theta, e)$
the following estimate holds
\begin{equation}\label{dist}
|\hat{\nu}(\xi)| \leq C|\hat{\mu}(\xi)|\cdot |sin \angle(\phi(\xi), W_{j})| = C|\hat{\mu}(\xi)| \cdot dist_{\G(1,E)}(\phi(\xi),W_{j}) \leq
\end{equation}
\[
\leq C\lVert \mu \rVert \cdot dist_{\G(1,E)}(\phi(\xi),W_{j})
\]
This is obvious if $e$ is orthogonal to $W_{j}$ (we can take $C=1$). If not, we use the fact that two functionals with the same kernel are proportional.

In coordinates $\xi = (\xi_{1}, \xi_{2}) \in V_{j} \times V_{j}^{\perp}$ let $B_{f}$ be given by
\[
  B_{f} = \{(\xi_{1},\xi_{2}): |\xi_{1}| \geq 1, |\xi_{2}|\leq f(|\xi_{1}|)\},
\]
where $f(t)=\log(1+t)$. Then $\R^{n} \setminus B_{f}$ is a 2-Riesz set and we can apply Corollary \ref{sobolevpert}. Indeed, we will show that the distribution $(\hat{\nu} 1_{B_{f}})\check{}$ \ is in $W^{-s,2}$ for an arbitrary $s>0$.

Lipschitz continuity of a bundle easily implies that we have an inequality \\ $dist_{\G(1,E)}(\phi(z),W_{j}) \lesssim dist_{\R^{n}}(z,V_{j})$ for $z \in S^{n-1}$. Indeed, let $z_{0} \in V_{j}\cap S^{n-1}$ be such that $dist_{\R^{n}}(z, z_{0}) = dist_{\R^{n}}(z, V_{j}\cap S^{n-1})$. Then 
\begin{equation}\label{lip}
dist_{\G(1,E)}(\phi(z),W_{j}) \leq dist_{\G(1,E)}(\phi(z),\phi(z_{0})) \lesssim dist_{\R^{n}}(z, z_{0}) \simeq dist_{\R^{n}}(z, V_{j}).
\end{equation}
By homogenity and above remarks we obtain
\[
\int_{B_{f}} |\hat{\nu}(\xi)|^{2}|\xi|^{-2s}d\xi \lesssim
\]
\begin{multline*}
 \stackrel{\scriptscriptstyle\eqref{dist}}{\lesssim} 
 \int_{\{1\leq |\xi_{1}| < \infty\}}\int_{\{ |\xi_{2}| \leq f(|\xi_{1}|) \}}dist_{\G(1,E)}\Big{(}\phi\Big{(}\frac{\xi}{|\xi|}\Big{)},W_{j}\Big{)}^{2} |\xi|^{-2s} d\xi_{2}d\xi_{1} 
\end{multline*}
\[
\stackrel{\scriptscriptstyle\eqref{lip}}{\lesssim} \int_{\{1\leq |\xi_{1}| < \infty\}}\int_{\{ |\xi_{2}| \leq f(|\xi_{1}|) \}}dist_{\R^{n}}\Big{(}\frac{\xi}{|\xi|},V_{j}\Big{)}^{2} |\xi|^{-2s} d\xi_{2}d\xi_{1}
\]
\[
\lesssim \int_{\{1\leq |\xi_{1}| < \infty\}} f(|\xi_{1}|)^{(n-2)}\bigg{(}\frac{f(|\xi_{1}|)}{|\xi_{1}|}\bigg{)}^{2}|\xi_{1}|^{-2s} d\xi_{1}
\]

For an arbitrary $\gamma>0$, $f(u) \leq C_{\gamma}u^{\gamma}$ when $u\geq 1$, so the last integral may be majorized, up to a constant, by 
\begin{equation*}
\begin{split}
\int_{1 \leq |\xi_{1}| < \infty} |\xi_{1}|^{\gamma n-2-2s} d\xi_{1} = 2\pi \int_{1}^{\infty} t^{\gamma n-1-2s} dt,
\end{split}
\end{equation*}
which is finite for $\gamma < \frac{2s}{n}$. Choosing such $\gamma$ we obtain $\lVert (\hat{\nu} 1_{B_{f}})\check{} \ \rVert_{W^{-s,2}} < \infty$, so by Corollary~\ref{sobolevpert} we get \boldmath$\nu(A)=0$\unboldmath, which gives a contradiction.
\end{proof}

\begin{remark}
	The proof works if we assume Lipschitz continuity of $\phi$ at points from $\cup_{i} V_{i}\cap S^{n-1}$ only.
\end{remark}

\subsection{Rectifiability of bundle measures}
As we have seen in the proof of Theorem~\ref{mainres}, the homogenity condition gives us a possibility to relate geometry of singular sets with values of bundles measures. Proof of Theorem \ref{regthm}, which employs similar principles, is a consequence of a qualitative reformulation of Theorem~\ref{rwthm}.

\begin{definition}
	For $A \subset \mathbb{R}^{n}$, by $N(A)$ we denote $\{v \in \mathbb{R}^{n}: \lVert v \rVert = 1, \ \lambda(p_{v}(A)) = 0 \}$. Here, for  $v \in \mathbb{R}^{n} \setminus \{ 0\}$, by $p_{v}:\mathbb{R}^{n} \to span\{v\}$ we understand the orthogonal projection on $span\{v\}$ and $\lambda$ stands for the $1$-dimensional Lebesgue measure on $span\{v\}$. 
\end{definition}

\begin{thm}
	\label{edthm}
	If $\phi : \mathbb{R}^{n}\backslash\{0\} \to \G(1,E) $ is a  homogeneous bundle, H\"older with exponent $> \frac{1}{2}$, then for each $\mu \in M_{\phi}(\mathbb{R}^{n},E)$ and an arbitrary Borel set $A \subset \mathbb{R}^{n}$ we have
	\[
	N(A) \subset \phi^{-1}(\mu(A)) :=  \{ u \in \mathbb{R}^{n}: \lVert u \rVert = 1, \ \mu(A) \in \phi(u) \}.
	\]
\end{thm}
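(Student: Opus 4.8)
The plan is to mimic the proof of Theorem~\ref{2dim}: reduce to a scalar measure by composing $\mu$ with a functional annihilating the fibre $\phi(v)$, and then feed the resulting decay estimate into the $s$-Riesz machinery. The decisive difference is that here the relevant subspace is the \emph{line} $\mathrm{span}\{v\}$ and the H\"older exponent exceeds $\tfrac12$, so the tail estimate will land in $L^{2}$ rather than merely in $H^{-s}$; this is exactly what lets me keep the sharp projection conclusion via Corollary~\ref{l2pert} instead of the weaker Corollary~\ref{sobolevpert}. Concretely, I fix $v\in N(A)$ and argue by contradiction: assuming $\mu(A)\notin\phi(v)$ (the case $\mu(A)=0$ being trivial), I choose $\theta\in E^{*}$ with $\phi(v)\subset\ker\theta$ and $\theta(\mu(A))\neq0$, and set $\nu=\theta(\mu)\in M(\R^{n})$, so that $\nu(A)=\theta(\mu(A))\neq0$ and $\hat\nu(\xi)=\theta(\hat\mu(\xi))$.

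The next step is a pointwise decay estimate near the forward ray $\R_{+}v$. Since $\theta$ annihilates $\phi(v)$ and, by homogeneity, $\hat\mu(\xi)\in\phi(\xi)=\phi(\xi/|\xi|)$, I obtain
\[
|\hat\nu(\xi)|\le \lVert\theta\rVert\,\lVert\hat\mu(\xi)\rVert\;dist_{\G}\big(\phi(\xi),\phi(v)\big)\lesssim \Big|\tfrac{\xi}{|\xi|}-v\Big|^{\beta},
\]
where $\beta>\tfrac12$ is the H\"older exponent; here I use $\lVert\hat\mu\rVert_{\infty}\le|\mu|(\R^{n})$ and the fact that the gap between the fibres is controlled by the scale-invariant H\"older modulus of $\phi$ on $S^{n-1}$. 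Note that near the antipodal direction $-v$ no such smallness holds, which is why the cone below is wrapped around the forward ray only.

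I then set up the Riesz geometry in coordinates $\xi=(\xi_{1},\xi_{2})\in\mathrm{span}\{v\}\times v^{\perp}$. Let $\Omega=\{\xi_{1}\ge1,\ |\xi_{2}|\le\log(1+\xi_{1})\}$ be a thin, slowly widening neighbourhood of the forward ray. Each line parallel to $v$ meets $\Omega$ in a forward half-line, so its intersection with $\R^{n}\setminus\Omega$ is a backward half-line, hence a Riesz set by Theorem~\ref{riesz1}, and this persists after an $\epsilon$-thickening; thus $A':=\R^{n}\setminus\Omega$ and $V=\mathrm{span}\{v\}$ satisfy hypothesis (2) of Corollary~\ref{l2pert}. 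For hypothesis (1), using the bound above and $|\xi|\approx\xi_{1}$ on $\Omega$,
\[
\int_{\Omega}|\hat\nu(\xi)|^{2}\,d\xi\lesssim\int_{1}^{\infty}\big(\log(1+\xi_{1})\big)^{2\beta+n-1}\,\xi_{1}^{-2\beta}\,d\xi_{1}<\infty,
\]
the finiteness being due precisely to $2\beta>1$; thus $\hat\nu$ restricted to $\R^{n}\setminus A'=\Omega$ is in $L^{2}$.

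Corollary~\ref{l2pert} then gives $\nu(F)=0$ for every $F$ with $\lambda_{V}(p_{V}(F))=0$; taking $F=A$ and recalling that $v\in N(A)$ means $\lambda(p_{v}(A))=0$, I get $\nu(A)=0$, contradicting $\theta(\mu(A))\neq0$. The step I expect to be most delicate is the geometric bookkeeping: verifying that $\R^{n}\setminus\Omega$, together with its $\epsilon$-neighbourhood, slices into a half-line uniformly over \emph{all} translates $V+a$, and confirming that the scale-invariant form of the H\"older estimate is the one that enters, since it is the combination of the $1$-dimensionality of $\mathrm{span}\{v\}$ with $\beta>\tfrac12$ that forces the tail integral to converge in $L^{2}$ and thereby preserves the projection statement.
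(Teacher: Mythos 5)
Your proposal is correct and follows essentially the same route as the paper's (sketched) proof: scalarize via a functional $\theta$ killing $\phi(v)$, use the H\"older exponent $>\tfrac12$ to show the restriction of $\hat\nu$ to a logarithmic neighbourhood of the ray through $v$ is in $L^{2}$, and invoke Corollary~\ref{l2pert} (rather than Corollary~\ref{sobolevpert}) to contradict $\nu(A)\neq 0$. Your explicit handling of the one-sidedness of the decay (wrapping the cone around the forward ray only and slicing the complement into half-lines for the F.~and~M.~Riesz theorem) is a detail the paper leaves implicit, and you have it right.
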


Note, that it proves Conjecture~\ref{ascon} if we replace $\dim_{H}$ by the lowest dimension of an affine subspace on which a measure does not vanish.

\begin{proof}\textit{(Sketch)} Let $A \subset \mathbb{R}^{n}$, $\mu(A)=e$, $\lambda(p_{v}(A)) = 0$ and assume that the thesis does not hold, i.e. $v \notin \phi^{-1}(e)$ for some $v$. We can choose a functional $\theta \in E^{*}$ satisfying $\phi(v) \subset ker \ \theta$ and $\theta(e)\neq 0$. The rest of the proof goes similarly as in Theorem \ref{2dim}. We replace $V_{j}$ by $span\{v\}$ and, by making a use of H\"older continuity, we prove that $\hat{\mu}$ is square summable in $B_{f}$. Instead of Corollary \ref{sobolevpert} we invoke Corollary \ref{l2pert}.
\end{proof}
To prove Theorem \ref{regthm}, we will need a part of Besicovitch-Federer projection theorem (see Theorem 18.1 in  \cite{M}):

\begin{thm} \label{bfederer} Let $A\subset \R^{n}$ be a Borel set with $\mathcal{H}^{m}(A) < \infty$, where $m<n$ is an integer. Then, $A$ is purely $m$-unrectifiable if and only if $\mathcal{H}^{m}(p_{V}(A)) = 0$ for almost all $V \in \G(m,\R^{n})$ (with respect to the natural measure on the Grassmannian).
\end{thm}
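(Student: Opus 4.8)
The statement is a biconditional, and I would treat the two implications separately: one is routine, while the other is the genuine Besicovitch--Federer content. Throughout I would pass freely to the equivalent measure-theoretic formulation via the integralgeometric (Favard) measure
\[
\mathcal{I}^{m}(A) = c_{n,m} \int_{\G(m,\R^{n})} \int_{V} \#\big(A \cap p_{V}^{-1}\{y\}\big)\, d\mathcal{H}^{m}(y)\, d\gamma(V),
\]
since the condition ``$\mathcal{H}^{m}(p_{V}(A)) = 0$ for $\gamma$-a.e.\ $V$'' is equivalent to $\mathcal{I}^{m}(A) = 0$.

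\textbf{Easy direction.} For the implication that a.e.\ vanishing projections force pure unrectifiability I would argue by contraposition. Suppose $A$ is not purely $m$-unrectifiable, so there is an $m$-rectifiable set $E$ with $\mathcal{H}^{m}(A \cap E) > 0$. By the approximate tangent plane characterization of rectifiable sets, at $\mathcal{H}^{m}$-a.e.\ point $x \in A \cap E$ there exists an approximate tangent $m$-plane $T_{x}$ with positive lower density, so at small scales the piece lies in a thin cone about $T_{x}$. For every $V$ whose projection $p_{V}$ is injective on $T_{x}$ --- that is, all $V$ outside a lower-dimensional exceptional subset of $\G(m,\R^{n})$ --- the area formula applied to the locally graph-like piece gives $\mathcal{H}^{m}(p_{V}(A \cap E)) > 0$. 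Integrating over the Grassmannian and using Fubini then yields $\mathcal{H}^{m}(p_{V}(A)) > 0$ on a positive-measure set of planes, which is the desired contrapositive.

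\textbf{Hard direction and its setup.} The substance is the converse: if $A$ is purely $m$-unrectifiable with $\mathcal{H}^{m}(A) < \infty$, then $\mathcal{I}^{m}(A) = 0$. First I would normalize using the standard density theorems for sets of finite measure: after discarding an $\mathcal{H}^{m}$-null set, I may assume the upper density $\Theta^{*m}(A,x)$ lies in $[2^{-m},1]$ at every $x \in A$, so that mass estimates at small scales are uniformly controlled. I would then argue by contradiction, assuming $\mathcal{I}^{m}(A) > 0$, so that the multiplicity-weighted projection is positive on a positive-measure set of directions.

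\textbf{Main obstacle.} The crux --- and by far the hardest step --- is to extract rectifiable structure from the assumption $\mathcal{I}^{m}(A) > 0$, contradicting pure unrectifiability. Here I would run the Besicovitch--Federer extraction: through a stopping-time covering argument across dyadic scales, positivity of the multiplicity-weighted projection forces, at a positive-measure set of points and scales, the existence of a direction in which $A$ is flat enough that a positive-measure subset is captured inside the graph of a single Lipschitz map over some $m$-plane. This is precisely Besicovitch's delicate density and combinatorial argument in the plane ($m=1$, $n=2$), generalized by Federer; alternatively one may substitute the tangent-measure analysis of Preiss--White, showing that purely unrectifiable tangent measures cannot concentrate near any fixed $m$-plane in a positive-measure set of directions. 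Either route constitutes the deep geometric measure theory content, and I expect it to occupy the bulk of the work, the remaining reductions being comparatively mechanical. Once the contradiction is reached, $\mathcal{I}^{m}(A) = 0$, hence $\mathcal{H}^{m}(p_{V}(A)) = 0$ for $\gamma$-a.e.\ $V$, completing the argument.
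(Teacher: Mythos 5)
This statement is not proved in the paper at all: it is the classical Besicovitch--Federer projection theorem, quoted verbatim as a known tool (Theorem 18.1 in \cite{M}) and used as a black box in the proof of Theorem \ref{regthm}. So your attempt cannot match ``the paper's approach''; the relevant question is whether your sketch stands on its own as a proof, and it does not. Your easy direction is essentially sound: the reduction to the integralgeometric measure $\mathcal{I}^{m}$, the use of approximate tangent planes, and the observation that $p_{V}$ is injective on a fixed $m$-plane $T_{x}$ for all $V$ outside a lower-dimensional subvariety of $\G(m,\R^{n})$ are all correct, and the bi-Lipschitz-on-a-graph-like-piece argument fills in routinely.

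The genuine gap is in the hard direction: the entire content of the theorem is compressed into the single sentence beginning ``Here I would run the Besicovitch--Federer extraction,'' which names the conclusion (a positive-measure piece captured in a Lipschitz graph) without supplying any argument for it. That extraction is not a mechanical stopping-time covering; it is a delicate multi-stage construction --- in Mattila's presentation it occupies the whole of Chapter 18, involving the careful interplay of projections in pairs of nearly orthogonal directions, quantitative estimates on the measure of ``bad'' cones of directions, and a combinatorial selection of points where the set clusters along lines --- and none of its actual mechanism appears in your text. Writing ``alternatively one may substitute the tangent-measure analysis'' does not close the gap either, since that alternative is likewise only cited, not executed. As submitted, your proposal is a correct \emph{outline} of the standard proof architecture with the central theorem-sized step asserted rather than proved; for the purposes of this paper the honest move is exactly what the authors do, namely cite \cite{M}, and if you intend a self-contained proof you must actually carry out the cone/density estimates of the Besicovitch--Federer argument rather than gesture at them.
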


\begin{proof}\textit{(of Theorem \ref{regthm})}
	Suppose that $\mu(F') \neq 0$ for some $F' \subset F$. Then, by Theorem~\ref{edthm}, $N(F') \subset \phi^{-1}(\mu(F'))$. But, by Besicovitch-Federer  theorem $N(F')$   is a dense subset of $S^{n-1}$ so, by the continuity of the bundle, $\phi$ is constant, which gives a contradiction.
\end{proof}

\begin{remark}
Because in the proof of Corollary \ref{sobolevpert} we modified a measure by convolving it with a Salem measure, analogous method does not give rectifiability in Theorem \ref{2dim}.
\end{remark}

\begin{remark}
Theorems \ref{2dim} and \ref{regthm} can be proved for more general class than homogeneous bundles. For example, since addition of square-summable functions do not have any influence on singular sets of measures, we may admit certain error in the sense of $L^{2}$ norm (c.f. original formulation of Theorem \ref{rwthm} in \cite{RW}).
\end{remark}

\section{Connections with PDE-constrainted measures}

\begin{example} If $(D_{1}^{s}f,D_{2}^{s}f,D_{3}^{s}f) = \mu$  for some natural $s$ and $f \in L^{1}(\R^{3})$, then we have $\hat{\mu}(\xi) = (2\pi i)^{s} (\xi_{1}^{s},\xi_{2}^{s},\xi_{3}^{s})\hat{f}(\xi)$, so $\phi(\xi) =  span \{(\xi_{1}^{s},\xi_{2}^{s},\xi_{3}^{s})\}$ and $E = \R^{3}$. Let us take \[
	V_{1}=span\{e_{2}, e_{3}\}, V_{2}=span\{e_{1}, e_{3}\}, V_{3}=span\{e_{1}, e_{2}\}.
\] Then $\phi(V_{j}\setminus \{0\}) = V_{j}$ and $\cap_{i}^{3}  V_{i} = \{0\}$, so assumptions of Theorem \ref{2dim} are fulfilled. In particular, we obtained a purely Fourier-analytic proof of dimension estimate for gradients from $BV(\R^{3})$.
\end{example}

\begin{example}\label{Vex}(cf.\cite{V})
	Let $V,W$ be some finitely dimensional vector spaces, $n \geq 1$ and $s \in \N$. Suppose that $A(D)$ is a homogeneous differential operator of order $s$ on $\R^{n}$ from $V$ to $W$, that is
	\[
	A(D)u = \sum_{\alpha \in \N^{n}, |\alpha| = s} A_{\alpha}(\partial^{\alpha}u)
	\]
	for $u \in C^{\infty}(\R^{n}, V)$, where $A_{\alpha} \in \mathcal{L}(V,W)$. We say that $A(D)$ is canceling if
	\[
	\bigcap_{\xi \in \R^{n} \setminus \{0\}} \A(\xi)[V] = \{0\},
	\]
	where $\A(\xi)$ stands for the principal symbol of $A$. Assume, that  in the above $\dim \ V = 1$ and for some $f \in L^{1} (\R^{n})$ we have $A(D)f = \mu$ in the sense of distributions. If $A$ is elliptic ($A(\xi)\neq 0$ for $\xi \neq 0$), then the canceling condition means that $\mu$ is subordinated to a nonconstant bundle $\phi(\xi)=span\{\A(\xi)\}$.
	
\end{example}

In the case of general bundles with values in $\G(m,E)$, the $l$-antisymmetry condition can be formulated as follows:
\textit{for each $(l+1)$-dimensional subspace  $V \subset \R^{n}$ there exist $\xi_{1},...,\xi_{j} \in V \cap S^{n-1}$ such that $\phi(\xi_{1})\cap ... \cap \phi(\xi_{j}) = \{0\}$}.

\begin{example}\label{vsmulti}(Continuation of Example~\ref{Vex})
Assume that columns of a matrix $\A(\xi)$: $\A_{1}(\xi),...,\A_{m}(\xi)$ are linearly independent. Then, the operator $A$ satisfies the canceling condition if and only if the bundle $\phi(\xi) = span\{\A_{1}(\xi),...,\A_{m}(\xi)\}$ is $(n-1)$-antisymmetric.
\end{example}

\begin{example}\label{afreeex}(\cite{ADHR}, Theorem 1.3., Corollary 1.4.)
Suppose that for $A(D)$ as before we have
\[
A(D)\mu = 0
\]
in the weak sense. If $\A(\xi)$ has constant rank, then any such measure belongs to the class given by the bundle $\phi(\xi) = ker\{\A(\xi)\}$ and the $l$-wave cone condition from \cite{ADHR} reads as $\cap_{U \in \G(l,V)} \phi(U \setminus \{0\})=\{0\}$. In the mentioned paper it is proved, among other things, that under this assumption, any such measure is at least $l$-dimensional. 
Hence, in this setting a constraint in Theorem \ref{2dim} is a particular case of the $2$-wave cone condition. However, we do not require any connections with differential operators or even smoothness of the bundle. In fact, our proof requires only Lipschitz continuity of $\phi$ at points from $\cup_{i}V_{i}\cap S^{n-1}$.
\end{example}

\section{Acknowledgements}
The authors would like to thank B. Kirchheim, F. Nazarov, B. Raita and D. Stolyarov for valuable discussions.

\end{document}